\newtheorem{theorem}{Theorem}[section]
\newtheorem{corollary}[theorem]{Corollary}
\newtheorem{definition}[theorem]{Definition}
\newtheorem{lemma}[theorem]{Lemma}
\newtheorem{proposition}[theorem]{Proposition}
\newtheorem{remark}[theorem]{Remark}
\newenvironment{proof}[1][Proof]{\noindent\textbf{#1.} }{\ \rule{0.5em}{0.5em}}
\begin{document}

\title{Fundamental groups, homology equivalences and one-sided $h$-cobordisms%
}
\author{Yang Su, Shengkui Ye}
\maketitle

\begin{abstract}
We give a sufficient and necessary condition for the fundamental group
homomorphism of a map between CW complexes (manifolds) to induce partial
homology equivalences. As applications, we obtain characterizations of
fundamental groups of homology spheres and Moore manifolds. Moreover, a
classification of one-sided $h$-cobordism of manifolds up to diffeomorphisms
is obtained, based on Quillen's plus construction with Whitehead torsions.
\end{abstract}

\section{Introduction}

When studying manifold version of Quillen's plus construction, Guilbault and
Tinsley \cite{gt} introduce the notion of \textit{one-sided h-cobordism.}
This is important to their study of ends of non-compact manifolds (see
Guilbault and Tinsley \cite{gt2}). Recall that a one-sided $h$-cobordism $%
(W;X,Y)$ is a compact cobordism between closed manifolds such that the
inclusion $Y\hookrightarrow W$ is a homotopy equivalence. In \cite{Ye2}, the
second author introduces the notion of \textit{one-sided homology cobordism. 
} Let $(W;X,Y)$ be a compact cobordism between closed manifolds and $R$ be a 
$\mathbb{Z}[\pi _{1}(W)]$-module. We call $(W;X,Y)$ a \textit{one-sided }$R$%
\textit{-homology cobordism} if the inclusion $Y\hookrightarrow W$ induces
isomorphisms $\pi _{1}(Y)\cong \pi _{1}(W)$ and $H_{q}(Y;R)\cong H_{q}(W;R)$
for all $q\geq 0$. When $R=\mathbb{Z}[\pi _{1}(W)],$ the one-sided $R$%
-homology cobordism is a one-sided $h$-cobordism. There are two aims in this
article. The first is to give a sufficient and necessary condition for the
fundamental group homomorphism of a map between CW complexes (or manifolds)
to induce a one-sided $\mathbb{Z}$-homology cobordism. The second is to give
a classification of one-sided $h$-cobordism of manifolds up to
diffeomorphisms.

We study the case of CW complexes first. Let $f:X\rightarrow Y$ be a map
between CW complexes inducing $\mathbb{Z}$-homology equivalence. When $X$ is
fixed and $f$ induces an epimorphism of fundamental groups, Rodr\'{\i}guez
and Scevenels \cite{RS} show that the kernel $\ker :=\ker (\pi _{1}(f):\pi
_{1}(X)\rightarrow \pi _{1}(Y))$ is a relative perfect subgroup of $\pi
_{1}(X)$ \textsl{i.e.} $[\ker ,\pi _{1}(X)]=\ker .$ Moreover, the maximal
such kernel is the intersection of the transfinite lower central series of $%
\pi _{1}(X)$. When $\pi _{1}(f)$ is not necessarily an epimorphism,
Bousfield \cite[Lemma 6.1]{b} shows that there exists a CW complex $Y$ such
that $f$ is $\mathbb{Z}$-homology equivalent and $\pi _{1}(Y)=G$ if and only
if $H_{1}(f):H_{1}(X;\mathbb{Z})\rightarrow H_{1}(G;\mathbb{Z})$ is an
isomorphism and $H_{2}(f):H_{2}(X;\mathbb{Z})\rightarrow H_{2}(G;\mathbb{Z})$
is epimorphic. We consider high-dimensional homology equivalences, as
follows. Assume that for each integer $q\geq 2,$ $f_{q}:H_{q}(X;\mathbb{Z}%
)\rightarrow H_{q}(Y;\mathbb{Z})$ is an isomorphism (high-dimensional
homology equivalences). An immediate consequence is that $f$ induces an
epimorphism $H_{2}(f)\colon H_{2}(\pi _{1}(X);\mathbb{Z})\rightarrow
H_{2}(\pi _{1}(Y);\mathbb{Z})$ of second homology groups of fundamental
groups, which could be obtained from the Hopf exact sequence. If we fix the
CW complex $X$ and a group homomorphism $\alpha \colon \pi
_{1}(X)\rightarrow G,$ we show that the necessary condition just mentioned
is also sufficient for the existence of a CW complex $Y$ with $\pi _{1}(Y)=G$
and a cellular map $f:X\rightarrow Y$ inducing $\alpha $ and
high-dimensional homology equivalences. More precisely, we have the
following result.

\begin{theorem}
\label{main}Let $X$ be a (finite, \textrm{resp.}) CW complex and $R$ a
subring of rationals or the finite ring $\mathbb{Z/}p$ for some prime number 
$p.$ Suppose that $\alpha \colon \pi _{1}(X)\rightarrow G$ is a group
homomorphism from the fundamental group of $X$ to a (finitely presented%
\textrm{, resp.}) group $G$. Then the following are equivalent:

\begin{enumerate}
\item[(i)] $\alpha $ induces an epimorphism $H_{2}(\alpha )\colon H_{2}(\pi
_{1}(X);R)\rightarrow H_{2}(G;R).$

\item[(ii)] There exists a (finite, \textrm{resp.}) CW complex $Y$ and a
cellular map $f\colon X\rightarrow Y$ such that $\pi _{1}(Y)=G,$ $\pi
_{1}(f)=\alpha \colon \pi _{1}(X)\rightarrow \pi _{1}(Y)$ and for any
integer $q\geq 2,$ $f$ induces an isomorphism%
\begin{equation*}
f_{q}\colon H_{q}(X;R)\rightarrow H_{q}(Y;R).
\end{equation*}
\end{enumerate}
\end{theorem}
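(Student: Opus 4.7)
The plan is to prove (ii)$\Rightarrow$(i) by naturality of Hopf's surjection, and (i)$\Rightarrow$(ii) by building $Y$ via an explicit cell-attaching construction that consumes the hypothesis at a crucial $3$-cell step.

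For (ii)$\Rightarrow$(i), Hopf's theorem furnishes a natural surjection $H_{2}(Z;R)\twoheadrightarrow H_{2}(\pi _{1}(Z);R)$ for any CW complex $Z$. Applying naturality to $f\colon X\to Y$ gives a commuting square whose horizontal arrows are onto and whose left vertical $H_{2}(f;R)$ is an isomorphism, forcing the right vertical $H_{2}(\alpha ;R)=H_{2}(\pi _{1}(f);R)$ to be onto by a diagram chase.

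For (i)$\Rightarrow$(ii) I would build $Y$ by attaching cells of increasing dimension. Pick a (finite, if $G$ is finitely presented) generating set $\{g_{s}\}_{s\in S}$ of $G$ and form $X_{1}:=X\vee \bigvee_{s\in S}S^{1}_{s}$, extending $\alpha $ to a surjection $\widetilde{\alpha }\colon \pi _{1}(X_{1})\twoheadrightarrow G$. Next attach $2$-cells to $X_{1}$ along loops realising a (finite, in the f.p.\ case) normal generating set of $\ker \widetilde{\alpha }$, obtaining $X_{2}$ with $\pi _{1}(X_{2})=G$ and $X\hookrightarrow X_{2}$ inducing $\alpha $. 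Since only $1$- and $2$-cells were used relative to $X$, the long exact sequence gives $H_{q}(X;R)\cong H_{q}(X_{2};R)$ for $q\geq 3$ at no cost, and the only remaining defect is
\[
C:=\operatorname{coker}\bigl(H_{2}(X;R)\to H_{2}(X_{2};R)\bigr).
\]
Comparing with the classifying maps to $B\pi _{1}(X)$ and $BG$, the square whose rows are the Hopf surjections $H_{2}(X;R)\twoheadrightarrow H_{2}(\pi _{1}(X);R)$ and $H_{2}(X_{2};R)\twoheadrightarrow H_{2}(G;R)$ and whose right vertical $H_{2}(\alpha ;R)$ is onto by (i) places $C$ inside $\ker \bigl(H_{2}(X_{2};R)\to H_{2}(G;R)\bigr)$ via a diagram chase. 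A second invocation of Hopf identifies this kernel with the image of spherical $2$-classes, so each generator of $C$ is represented by a map $S^{2}\to X_{2}$; attaching a $3$-cell along each such representative kills the corresponding class. Choosing the representatives carefully (so that the induced relative boundary map $C_{3}^{\mathrm{rel}}\to C_{2}^{\mathrm{rel}}$ is injective) yields the desired $Y$, with any residual higher-dimensional defects handled by an inductive continuation of the same spherical-lifting procedure in ascending dimensions.

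The main obstacle is precisely the spherical-lifting step above, since this is the only place where hypothesis (i) actually enters the argument. Care is required with the $R$-coefficients: the spherical representation is extracted from the rational Hurewicz theorem for $R\subseteq \mathbb{Q}$ and from its mod-$p$ analogue in the Serre class of finite $p$-groups when $R=\mathbb{Z}/p$. The finite refinement further requires finite generation of $C$ (from finiteness of $X$, finite presentability of $G$, and Noetherianity of $R$) together with enough flexibility in the choice of attaching maps to keep the construction bounded in dimension.
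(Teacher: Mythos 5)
Your overall strategy matches the paper's: attach $1$- and $2$-cells to realise $\alpha$ on $\pi_{1}$, then attach $3$-cells to kill the cokernel of $H_{2}(X;R)\to H_{2}(X_{2};R)$, using Hopf's sequence and hypothesis (i) in a diagram chase to show that the defect $C$ is hit by spherical classes. The (ii)$\Rightarrow$(i) direction is fine. However, the crucial step — the ``spherical-lifting'' you single out — contains a genuine gap, and the tools you reach for do not repair it. With $R$-coefficients Hopf's exact sequence reads $H_{2}(\widetilde{X_{2}})\otimes_{\mathbb{Z}[G]}R\to H_{2}(X_{2};R)\to H_{2}(G;R)\to 0$, so the kernel is only the \emph{$R$-span} of Hurewicz images of $\pi_{2}(X_{2})$, not the set of spherical classes itself. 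Since you will attach $3$-cells along genuine maps $S^{2}\to X_{2}$ and you need the resulting boundary map $C_{3}^{\mathrm{rel}}\to C_{2}^{\mathrm{rel}}$ to be an isomorphism onto a complement (so that both $H_{3}(Y,X;R)$ vanishes and $H_{2}(Y,X;R)\to H_{1}(X;R)$ is injective), you need an $R$-\emph{basis} of $C$ lying in the honest image of $\pi_{2}(X_{2})$ in $H_{2}(X_{2};R)$ — not merely in its $R$-span. This is a module-theoretic assertion: $C$ is free over the PID $R$ (being a submodule of the free module $H_{2}(X_{2},X;R)$ — this is where the PID hypothesis is really doing its work, not in the finiteness refinement as you suggest), and one must show a subgroup of $R^{r}$ whose $R$-span is all of $R^{r}$ contains an $R$-basis. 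The paper packages this as the hypothesis that $R$ is a ``$G$-dense ring'' and verifies it for $R\subseteq\mathbb{Q}$ and $R=\mathbb{Z}/p$. Rational Hurewicz and Serre class theory are not the relevant tools here — they concern homotopy versus homology of simply connected spaces, whereas the obstruction is about lifting an $R$-basis to integral generators.

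Two further points. First, your aside that ``residual higher-dimensional defects'' are ``handled by an inductive continuation in ascending dimensions'' is a red herring: once the $3$-cells are attached along a spherical basis of $C$, the relative chain complex of $(Y,X)$ is concentrated in degrees $\leq 3$ with vanishing $H_{2}$ and $H_{3}$, so a five-lemma argument finishes the proof with no further cell attachment. Suggesting an open-ended induction indicates the termination was not actually checked. Second, your remark that the spherical-lifting step is ``the only place where (i) enters'' is inconsistent with your own proof, since the diagram chase that places representatives of $C$ in $\ker\bigl(H_{2}(X_{2};R)\to H_{2}(G;R)\bigr)$ already uses surjectivity of $H_{2}(\alpha)$.
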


When $f$ is a homology equivalence, this clearly recovers the Bousfield's
result mentioned above (cf. \cite[Lemma 6.1]{b}).

In Ye \cite{Ye}, the second author shows that when $H_{1}(\alpha )\colon
H_{1}(\pi _{1}(X);\mathbb{Z})\rightarrow H_{1}(G;\mathbb{Z})$ is injective
and $H_{2}(\alpha )\colon H_{2}(\pi _{1}(X);\mathbb{Z})\rightarrow H_{2}(G;%
\mathbb{Z})$ is surjective, there is a CW complex $Y$ obtained by adding
low-dimensional cells to $X$ such that the fundamental group $\pi _{1}(Y)=G$
and the inclusion map $f:X\rightarrow Y$ induces the same fundamental group
homomorphism as $\alpha $ and for any integer $q\geq 2,$ the map $%
f_{q}:H_{q}(X;\mathbb{Z})\rightarrow H_{q}(Y;\mathbb{Z})$ is an isomorphism.
Actually in \cite{Ye}, more general coefficients are considered. Such a
construction gives a unified approach to Quillen plus construction,
Bousfield's integral homology localization, the existence of Moore spaces $%
M(G,1)$, Bousfield and Kan's partial $k$-completion of spaces and some
examples in the zero-in-the-spectrum conjecture. Compared with \cite[Theorem
1.1]{Ye}, in Theorem \ref{main} we drop the condition that $H_{1}(\alpha )$
is injective, but only for the cases when the coefficients $R$ are subrings
of the rationals or $\mathbb{Z}$/$p$ for some prime number $p.$ In these
cases, $R$ are principal ideal domains (PID). Therefore, all the
applications in \cite{Ye} when $R$ is a PID are corollaries of Theorem \ref%
{main} as well. These include Bousfield's integral homology localization and
the existence of Moore spaces $M(G,1)$. In \cite[Proposition 4.4]{Levin},
Levin proves, as emphasized by Dranishnikov \cite{Dra}, that for every
connected CW-complex $K$ there is a simply connected CW-complex $K^{+}$
obtained from $K$ by attaching cells of dimension $2$ and $3$ such that the
inclusion $K\rightarrow K^{+}$ induces isomorphisms of homology groups in
dimension$>1$. This is a special case of Theorem 1.1 when $R=\mathbb{Z}$ and 
$G=0$.

A further application is the following: let $n$ be a positive integer and $R$
be a subring of the rationals or the finite ring $\mathbb{Z/}p$ for some
prime number $p.$ We define an $R$-homology $n$-sphere to be a CW complex $Y$
with the same homology groups as those of the standard sphere $S^{n}$, i.e. $%
H_{\ast }(Y;R)=H_{\ast }(S^{n};R) $. When $R=\mathbb{Z}$, $n\geq 5$ and $%
Y=Y^{n}$ is a manifold, Kervaire \cite{kar} proves that a finitely presented
group $G$ is the fundamental group of a $\mathbb{Z}$-homology $n$-sphere $Y$
if and only if $H_{1}(G;\mathbb{Z})=H_{2}(G;\mathbb{Z})=0.$ The $\mathbb{Z}$%
-homology spheres are also studied by Dror \cite{dror}. The following result
gives a complete characterization of fundamental groups of $R$-homology
spheres for general coefficients $R$.

\begin{corollary}
\label{scor1} Suppose that $R$ is a subring of the rationals or the finite
ring $\mathbb{Z}/p$ for some prime number $p.$ Let $G$ be a (finitely
presented, \textrm{resp.}) group satisfying $H_{2}(G;R)=0$ and $X$ a
(finite, \textrm{resp.}) CW complex. There exists a (finite, \textrm{resp.})
CW complex $Y$ with $\pi _{1}(Y)=G$ and the homology group $H_{i}(Y;R)\cong
H_{i}(X;R),$ $i\geq 2,$ obtained from $X$ by attaching 1-cells, 2-cells and
3-cells. In particular, we have the following:

\begin{enumerate}
\item[(i)] A (finitely presented, \textrm{resp.}) group $G$ is the
fundamental group of an (finite, \textrm{resp.}) $R$-homology circle (i.e. $%
1 $-sphere) if and only if $H_{1}(G;R)=R$ and $H_{2}(G;R)=0$.

\item[(ii)] A group $G$ is the fundamental group of an $R$-homology $2$%
-sphere if and only if $H_{1}(G;R)=0$ and $H_{2}(G;R)$ is a quotient of $R$;

\item[(iii)] When $n>2$, a (finitely presented, \textrm{resp.}) group $G$ is
the fundamental group of an (finite, \textrm{resp.}) $R$-homology $n$-sphere
if and only if $H_{1}(G;R)=H_{2}(G;R)=0$.
\end{enumerate}
\end{corollary}

We now study the case of manifolds. The following result is a manifold
version of Theorem \ref{main}. (In the remainder of this paper, we assume
all manifolds are smooth manifolds, but our results hold in the PL and
topological categories as well.)

\begin{theorem}
\label{main2}Let $X$ be a closed manifold of dimension $n$ ($n\geq 5$), $G$
be a finitely presented group and $\alpha \colon \pi _{1}(X)\rightarrow G$ a
group homomorphism. Assume that $X$ is spin or that $\ker \{H_1(\alpha)
\colon H_{1}(\pi _{1}(X);\mathbb{Z})\rightarrow H_{1}(G;\mathbb{Z})\}$ is
torsion free. The following are equivalent:

\begin{enumerate}
\item[(i)] $\alpha $ induces an epimorphism $H_{2}(\alpha )\colon H_{2}(\pi
_{1}(X);\mathbb{Z})\rightarrow H_{2}(G;\mathbb{Z}).$

\item[(ii)] There exists a cobordism $(W;X,Y)$ such that $\pi _{1}(W)=G$ and
the inclusion map $g:X\hookrightarrow W$ induces the same fundamental group
homomorphism as $\alpha $, and for any integer $q\geq 2,$ the map $g$
induces an isomorphism 
\begin{equation*}
H_{q}(X;\mathbb{Z})\cong H_{q}(W;\mathbb{Z}).
\end{equation*}
\end{enumerate}
\end{theorem}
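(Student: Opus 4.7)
The easy direction (ii) $\Rightarrow$ (i) I would handle by naturality of the Hopf exact sequence $\pi_2(Z) \to H_2(Z) \to H_2(\pi_1 Z) \to 0$: combining the two copies of this sequence for $X$ and $W$ with the hypothesis that $g_* : H_2(X) \to H_2(W)$ is an isomorphism immediately forces $H_2(\alpha) : H_2(\pi_1(X);\mathbb{Z}) \to H_2(G;\mathbb{Z})$ to be surjective.

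For the converse (i) $\Rightarrow$ (ii), my plan is to realize the CW construction of Theorem \ref{main} geometrically. First, apply Theorem \ref{main} with $R = \mathbb{Z}$ to obtain a CW complex $Y$, built from $X$ by attaching $2$- and $3$-cells only, together with an inclusion $f : X \hookrightarrow Y$ realizing $\alpha$ on fundamental groups and inducing isomorphisms on $H_q(-;\mathbb{Z})$ for every $q \geq 2$. I would then construct $W$ from $X \times [0,1]$ by attaching to $X \times \{1\}$ one $2$-handle for each $2$-cell and one $3$-handle for each $3$-cell of $Y$, arranged so that $W$ deformation retracts rel $X$ onto a space homotopy equivalent to $Y$; the inclusion $g : X \hookrightarrow W$ will then inherit the required algebraic properties from $f$.

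The main obstacle lies in the actual handle-attaching step. The attaching circles of the $2$-handles can be embedded in the $n$-manifold $X \times \{1\}$ by transversality (since $n \geq 5$) and given arbitrary framings. After these $2$-handles are attached, forming a cobordism $W_2$, each $3$-cell of $Y$ must be realized by an embedded $2$-sphere in the $n$-dimensional boundary $\partial_+ W_2$ equipped with a trivialization of its rank-$(n-2)$ normal bundle. Embeddedness is again granted by general position, but the normal bundle carries a primary framing obstruction $w_2 \in H^2(S^2;\mathbb{Z}/2) \cong \mathbb{Z}/2$ that must vanish.

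It is precisely this $\mathbb{Z}/2$-obstruction that is controlled by the hypothesis ``$X$ is spin or $\ker H_1(\alpha)$ is torsion free''. In the spin case, I would select the framings of the $2$-handles so that $W_2$ inherits a spin structure; then $w_2(\partial_+ W_2) = 0$ and every embedded $S^2$ in $\partial_+ W_2$ automatically has trivial normal bundle. In the torsion-free case, the absence of $2$-torsion in $\ker H_1(\alpha)$ supplies enough flexibility to replace each problematic $2$-handle attaching circle by a homologous representative whose induced framing is the one required, without altering the $\pi_1$-class being killed. Either way, all $2$- and $3$-handles can be attached successfully, and a direct comparison of the handle chain complex of $(W,X)$ with the cellular chain complex of $(Y,X)$ delivers the required isomorphisms $H_q(X;\mathbb{Z}) \cong H_q(W;\mathbb{Z})$ for $q \geq 2$.
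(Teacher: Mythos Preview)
Your overall shape is right, and the easy direction together with the spin case match the paper. There is a minor slip: Theorem~\ref{main} builds $Y$ from $X$ using $1$-, $2$- \emph{and} $3$-cells (the $1$-cells are needed when $\alpha$ is not surjective), so correspondingly you must allow $1$-handles as well as $2$- and $3$-handles.

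The real gap is in your torsion-free case. The sentence ``replace each problematic $2$-handle attaching circle by a homologous representative whose induced framing is the one required'' is not an argument: the framing of a $2$-handle is an independent $\mathbb{Z}/2$ choice, not something determined by the embedded circle, and moving the circle within its homotopy class does nothing to the obstruction $\langle w_2(W_1),x_i\rangle$. What actually has to be shown is that there is a \emph{simultaneous} choice of framings for all the $2$-handles making every $\langle w_2,x_i\rangle$ vanish, and this is a linear-algebra problem over $\mathbb{Z}/2$: switching the framing of the $k$-th $2$-handle alters $\langle w_2,x_i\rangle$ by $a_{ik}\bmod 2$, where $j(x_i)=\sum_k a_{ik}b_k$ in the handle basis of $H_2(W_1,X)$. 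The paper uses the hypothesis that $\ker H_1(\alpha)$ is torsion free precisely to show that $\mathrm{im}\,j$ is a \emph{direct summand} of the free abelian group generated by the $2$-handles; this lets one extend the functional $j(x_i)\mapsto\langle w_2(W_1),x_i\rangle$ from $\mathrm{im}\,j$ to a homomorphism $f$ on all handle classes $b_k$, and then reframing exactly those handles with $f(b_k)=1$ kills all the obstructions at once. Your proposal does not supply this extension step, and without it there is no reason the required framings exist.
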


Compared with \cite[Theorem 1.1]{Ye2}, in Theorem \ref{main2} we drop the
spin condition on $X$ when $H_{1}(\alpha )$ is injective for the
coefficients $R=\mathbb{Z}$. Therefore, all the applications in \cite{Ye2}
when $R=\mathbb{Z}$ and $\ker (H_{1}(\alpha ))$ is a free abelian group are
corollaries of Theorem \ref{main2} as well. These include existence of
homology spheres, characterizations of high-dimensional knot groups and
integral homology localization of manifolds. As another application of
Theorem \ref{main2}, we give a characterization of fundamental groups of
Moore manifolds (see Section \ref{app} for more details).

The following corollary of Theorem \ref{main2} gives a characterization of
the fundamental groups of a one-sided $\mathbb{Z}$-homology cobordism.

\begin{corollary}
\label{mcor2}Let $X$ be a closed manifold of dimension $n$ ($n\geq 5$), $G$
a finitely presented group and $\alpha \colon \pi=\pi _{1}(X)\rightarrow G$
a group homomorphism. The following are equivalent.

\begin{enumerate}
\item[(i)] $H_{1}(\alpha ):H_{1}(\pi ;\mathbb{Z})\rightarrow H_{1}(G;\mathbb{%
Z})$ is an isomorphism and $H_{2}(\alpha ):H_{2}(\pi ;\mathbb{Z})\rightarrow
H_{2}(G;\mathbb{Z})$ is an epimorphism;

\item[(ii)] There exists a one-sided $\mathbb{Z}$-homology cobordism $%
(W;X,Y) $ with $\pi _{1}(W)=G$ and the inclusion $X\hookrightarrow W$
induces the same fundamental group homomorphism as $\alpha$.
\end{enumerate}
\end{corollary}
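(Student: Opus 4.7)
The plan is to deduce Corollary~\ref{mcor2} directly from Theorem~\ref{main2}, using Poincar\'e--Lefschetz duality to promote the cobordism it produces into one that is also one-sided on the $Y$-side.

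For (ii)$\Rightarrow$(i), start with a one-sided $\mathbb{Z}$-homology cobordism $(W;X,Y)$, so $H_{*}(W,Y;\mathbb{Z})=0$. Poincar\'e--Lefschetz duality (with the orientation local system in the non-orientable case) gives $H^{n+1-q}(W,X;\mathbb{Z})\cong H_{q}(W,Y;\mathbb{Z})=0$, and universal coefficients forces $H_{*}(W,X;\mathbb{Z})=0$. Thus $g\colon X\hookrightarrow W$ is itself a $\mathbb{Z}$-homology equivalence. Since $H_{1}$ of a space depends only on its fundamental group, $H_{1}(\alpha)$ is identified with the isomorphism $H_{1}(X)\cong H_{1}(W)$. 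Surjectivity of $H_{2}(\alpha)$ then follows from naturality of the Hopf surjection $H_{2}(Z)\twoheadrightarrow H_{2}(\pi_{1}(Z))$: the composite $H_{2}(X)\twoheadrightarrow H_{2}(\pi_{1}(X))\xrightarrow{H_{2}(\alpha)} H_{2}(G)$ coincides with the surjection $H_{2}(X)\xrightarrow{\cong} H_{2}(W)\twoheadrightarrow H_{2}(G)$, forcing $H_{2}(\alpha)$ to be surjective.

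For (i)$\Rightarrow$(ii), the assumption that $H_{1}(\alpha)$ is an isomorphism makes $\ker H_{1}(\alpha)=0$ trivially torsion free, so Theorem~\ref{main2} applies without any spin hypothesis and yields a cobordism $(W;X,Y)$ with $\pi_{1}(W)=G$, $\pi_{1}(g)=\alpha$, and $H_{q}(X;\mathbb{Z})\cong H_{q}(W;\mathbb{Z})$ for all $q\geq 2$. Combined with $H_{1}(X)\cong H_{1}(W)$ coming from $H_{1}(\alpha)$, the inclusion $g$ is a $\mathbb{Z}$-homology equivalence, so $H_{*}(W,X;\mathbb{Z})=0$. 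Reading Poincar\'e--Lefschetz duality in the opposite direction gives $H_{q}(W,Y;\mathbb{Z})\cong H^{n+1-q}(W,X;\mathbb{Z})=0$, so that $Y\hookrightarrow W$ induces a $\mathbb{Z}$-homology isomorphism.

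The main obstacle is to arrange that $\pi_{1}(Y)\to G$ is also an isomorphism, since Theorem~\ref{main2} only controls $\pi_{1}(W)$. The handle-theoretic construction underlying Theorem~\ref{main2} builds $W$ from $X\times[0,1]$ by attaching $2$- and $3$-handles along framed curves and $2$-spheres in $X\times\{1\}$, and these modify $\pi_{1}(W)$ and $\pi_{1}(Y)$ in parallel; one should be able to track the construction and verify $\pi_{1}(Y)\cong G$ directly. Failing that, any residual element of $\ker(\pi_{1}(Y)\to G)$ already bounds a disk in $W$ and can be killed by surgery on $Y$ within a collar $Y\times[0,\epsilon]\subset W$, followed by a cancelling $3$-handle to preserve the homology of $W$; since we only cancel elements that are already trivial in $\pi_{1}(W)$, the group $G=\pi_{1}(W)$ is unchanged. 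This finally identifies $Y\hookrightarrow W$ as both a $\pi_{1}$- and a $\mathbb{Z}$-homology isomorphism, so $(W;X,Y)$ is one-sided as required.
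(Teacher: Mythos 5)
Your overall route --- invoke Theorem~\ref{main2} (applicable since $\ker H_1(\alpha)=0$ is torsion free), note that the $H_1(\alpha)$ isomorphism upgrades the $q\geq 2$ isomorphisms to a full $\mathbb{Z}$-homology equivalence $H_*(W,X;\mathbb{Z})=0$, and then apply Poincar\'e--Lefschetz duality to flip this to $H_*(W,Y;\mathbb{Z})=0$ --- is the same as the paper's one-line deduction, and the (ii)$\Rightarrow$(i) direction is fine. The gap is in your treatment of $\pi_1(Y)\cong G$. Your primary suggestion, ``one should be able to track the construction and verify $\pi_1(Y)\cong G$ directly,'' is a statement of intent rather than an argument. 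The actual argument is the standard handle-turning observation: the $W$ produced by the proof of Theorem~\ref{main2} is built from $X\times[0,1]$ by attaching handles of index $1$, $2$ and $3$, so read from the $Y$-side it is built from $Y\times[0,1]$ by attaching handles of index $n-2$, $n-1$ and $n$, all of which are $\geq 3$ since $n\geq 5$. Attaching handles of index $k\geq 3$ leaves $\pi_1$ unchanged (van Kampen, since $S^{k-1}\times D^{n+1-k}$ and the handle are simply connected), so $\pi_1(Y)\to\pi_1(W)$ is automatically an isomorphism and there is nothing left to fix.

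Your ``failing that'' contingency is not correct as written and should be dropped. Attaching a $2$-handle to $W$ along a circle $c\subset Y$ that is null-homotopic in $W$ does kill $c$ in $\pi_1(Y)$ without changing $\pi_1(W)$, but it destroys the homology equivalence you just established: $H_2(W',W)\cong\mathbb{Z}$ together with $H_*(W,X)=0$ forces $H_2(W',X)\cong\mathbb{Z}\neq 0$. You acknowledge this by invoking ``a cancelling $3$-handle to preserve the homology of $W$,'' but do not say what that handle is. A $3$-handle that geometrically cancels the $2$-handle simply returns $W$ with $Y$ unchanged; an algebraically cancelling one would have to be attached along the embedded $2$-sphere obtained by capping the core of the $2$-handle with the null-homotopy disk in $W$, after verifying its normal bundle is trivial. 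None of this is needed once the index-$\geq 3$ argument is in place, so the fix is to replace the contingency paragraph by that observation.
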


When $\alpha $ is an epimorphism, Corollary \ref{mcor2} is the integral
localization of manifolds (cf. Corollary \ref{integral}), which was first
proved by the second author in \cite{Ye2}.

While it seems complicated to give a classification of one sided $R$%
-homology cobordisms for a general module $R,$ we give a classification of
one-sided $h$-cobordisms up to diffeomorphisms. Two one-sided $h$-cobordisms 
$(W;M,N)$ and $(W^{\prime };M,N^{\prime })$ are equivalent if there exists a
diffeomorphism $f\colon W\rightarrow W^{\prime }$ such that $f|_{M}=\mathrm{%
id}_{M}$ and $f(N)=N^{\prime }$. Clearly this is an equivalence relation.
Denote by $S_{h}(M)$ the set of all equivalence classes of one-sided $h$%
-cobordism $(W;M,N)$ on $M$. We have the following result.

\begin{theorem}
\label{th3}Let $M^{n}$ be a manifold of dimension $n\geq 5.$ Denote by $%
\mathrm{Pf(}\pi _{1}(M)\mathrm{)}$ the set of all normally finitely
generated perfect normal subgroups in $\pi _{1}(M).$ Then there is a
bijection of sets%
\begin{equation*}
S_{h}(M)\cong \bigcup_{P\in \mathrm{Pf(}\pi _{1}(M)\mathrm{)}}\mathrm{Wh}%
(\pi _{1}(M)/P),
\end{equation*}%
where $\mathrm{Wh}(-)$ is the Whitehead group.
\end{theorem}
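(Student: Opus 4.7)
The plan is to construct the bijection $\Phi \colon S_h(M) \to \bigcup_{P \in \mathrm{Pf}(\pi_1(M))} \mathrm{Wh}(\pi_1(M)/P)$ by sending a one-sided $h$-cobordism $(W;M,N)$ to the pair $(P_W, \tau(W,N))$, where $P_W := \ker(\pi_1(M) \to \pi_1(W))$ and $\tau(W,N) \in \mathrm{Wh}(\pi_1(W)) = \mathrm{Wh}(\pi_1(M)/P_W)$ is the Whitehead torsion of the homotopy equivalence $N \hookrightarrow W$. The first step is to verify $P_W \in \mathrm{Pf}(\pi_1(M))$: normal finite generation is clear from a finite handle decomposition of $W$ relative to $M$, while perfectness follows from Lefschetz duality, which converts the hypothesis $H_*(W,N;\mathbb{Z}[\pi_1(W)])=0$ into $H_*(W,M;\mathbb{Z}[\pi_1(W)])=0$; interpreting the latter on the $\pi_1(W)$-cover of $M$ yields $P_W^{\mathrm{ab}}=0$. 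Well-definedness of $\Phi$ on equivalence classes is forced by $f|_M=\mathrm{id}_M$ and $f(N)=N'$.

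For surjectivity, I realize an arbitrary pair $(P,\tau)$ in two steps. First, Quillen's manifold plus construction (as carried out by Hausmann and used by Guilbault--Tinsley in this setting) produces a one-sided $h$-cobordism $(W_0;M,N_0)$ with $P_{W_0}=P$: attach $2$-handles along loops in $M$ normally generating $P$, then attach $3$-handles to cancel the resulting classes in $H_2$ with $\mathbb{Z}[G]$-coefficients, where $G:=\pi_1(M)/P$; the hypothesis $n\geq 5$ lets the attaching spheres embed disjointly. Let $\tau_0\in\mathrm{Wh}(G)$ denote the Whitehead torsion of $(W_0,N_0)$. Next, the realization part of the $s$-cobordism theorem supplies an $h$-cobordism $(V;N_0,N)$ with $\tau(V,N)=\tau-\tau_0$, and setting $W:=W_0\cup_{N_0}V$ yields $\tau(W,N)=\tau_0+\tau(V,N)=\tau$ by the composition formula, so $\Phi([W;M,N])=(P,\tau)$.

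For injectivity, given two one-sided $h$-cobordisms $(W;M,N)$ and $(W';M,N')$ with the same invariants $(P,\tau)$, I form the double $V:=W\cup_M(-W')$ as a cobordism from $N'$ to $N$. Van Kampen gives $\pi_1(V)=G*_{\pi_1(M)}G=G$, since both quotients $\pi_1(M)\twoheadrightarrow G$ have the same kernel $P$. A Mayer--Vietoris computation with $\mathbb{Z}[G]$-coefficients, using that $M\hookrightarrow W$ and $M\hookrightarrow W'$ are $\mathbb{Z}[G]$-homology equivalences (by the same Lefschetz-duality step as above), shows $N,N'\hookrightarrow V$ are $\mathbb{Z}[G]$-homology equivalences; combined with the $\pi_1$-isomorphism, Whitehead's theorem promotes these to honest homotopy equivalences. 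The sum formula for Whitehead torsion then gives $\tau(V,N)=\tau(W,N)-\tau(W',N')=0$, so $V$ is an $s$-cobordism and the $s$-cobordism theorem produces $V\cong N\times I$. From this product structure I extract the desired diffeomorphism $W\to W'$ rel $M$ carrying $N$ to $N'$.

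The main obstacle is this final extraction. Because $M$ is generally not diffeomorphic to $N$, the separating hypersurface $M\subset V$ cannot be isotoped to a level slice of $V\cong N\times I$, so $W\cong W'$ does not read off directly from the product structure. The argument will instead combine a bicollar of $M$ in $V$ with the relative version of the $s$-cobordism trivialization to build a rel-$M$ product structure on $V$ identifying the two halves. A secondary technical point is tracking bases through the Mayer--Vietoris decomposition to pin down the exact sign with which $\tau(W',N')$ enters the sum formula for $\tau(V,N)$.
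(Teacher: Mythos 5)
Your map $\Phi$, the verification that $P_W$ is perfect and normally finitely generated, and the surjectivity argument are all sound. In fact the surjectivity route you take (do the manifold plus construction to get some $(W_0;M,N_0)$ with torsion $\tau_0$, then glue on an $h$-cobordism of torsion $\tau-\tau_0$ using the realization half of the $s$-cobordism theorem) is exactly the alternative that the authors acknowledge before Theorem~\ref{mfd} as ``a combination of \cite[Theorem 4.1]{gt} and \cite[Existence Theorem 11.1]{Milnor}''; the paper instead builds the handle decomposition so that torsion and homotopy type are controlled simultaneously. Either works.

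The injectivity argument, however, has a genuine gap, and it is not merely the ``sign'' issue you flag at the end. Forming the double $V=W\cup_{M}W'$, the additivity formula gives $\tau(V,N)=\tau(W,N)+i_{*}\tau(V,W)$, and excision identifies $\tau(V,W)$ with $\tau(W',M)$. But $\tau(W',M)$ is \emph{not} $-\tau(W',N')$: by Milnor's duality formula for Whitehead torsion (\cite[p.~394]{Milnor}) it equals $(-1)^{n}\overline{\tau(W',N')}$, where the bar is the involution on $\mathrm{Wh}(G)$ induced by $g\mapsto g^{-1}$. So $\tau(V,N)=\tau+(-1)^{n}\bar{\tau}$, which is generally nonzero (it is not of the form $\pm(\tau-\tau)$, and no choice of orientation or basis conventions makes it vanish). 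Thus the double is an $h$-cobordism but typically \emph{not} an $s$-cobordism, and the $s$-cobordism theorem does not apply to it. On top of that, even in the cases where $\tau+(-1)^{n}\bar{\tau}=0$, the difficulty you yourself identify in the last paragraph---that $M$ is not isotopic to a level slice of $V\cong N\times I$, so a product structure does not directly hand you a rel-$M$ diffeomorphism $W\cong W'$---is a real obstruction, and the proposed fix (``bicollar plus relative $s$-cobordism trivialization'') is too vague to close it.

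The paper's uniqueness argument (Theorem~\ref{mfd}, following \cite[p.~197]{fq}) is built precisely to avoid both problems at once: instead of trying to trivialize $X=W\cup_{M}W'$ as an $(n+1)$-dimensional $s$-cobordism, one goes up a dimension and takes an $h$-cobordism $(V;X,X_{1})$ \emph{rel boundary} with $\tau(X\to V)=(-1)^{n-1}\bar{\tau}$. This cancels the surplus torsion, making $\tau(W\to V)=\tau(W\to X)+\tau(X\to V)=(-1)^{n}\bar{\tau}+(-1)^{n-1}\bar{\tau}=0$, so $V$ is an $s$-cobordism rel boundary from $W$ to $W'$, and the rel-boundary $s$-cobordism theorem produces the diffeomorphism $W\cong W'$ rel $M$ directly, with no need to locate $M$ inside a product structure. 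If you want to salvage your double construction, you would need to incorporate this extra step (or an equivalent device) rather than hope the torsion of the double vanishes.
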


\medskip

The proof of Theorem \ref{th3} is based on a manifold version of Quillen's
plus construction with a given Whitehead torsion (see Section \ref{qm} for
more details).

The paper is organized as follows. In Section 2, we prove Theorems \ref{main}%
, \ref{main2} and list some applications. In Section 3, we introduce
Quillen's plus construction with Whitehead torsions for CW complexes and
manifolds. Theorem \ref{th3} is proved at the end of Section 3.

\medskip

\noindent \emph{Acknowledgements} S.-K.Y.~would like to thank the Chinese
Academy of Sciences for a visit during fall 2011 by which the research of
this paper was initiated. Y.S.~was partially supported by NSFC (grand No.
10901151); S.-K.~Y. was partially supported by RDF-13-02-08 of Xi'an
Jiaotong-Liverpool University. The authors would like to thank the referees
for helpful suggestions for improving the paper. The authors would also like
to thank C.~R.~Guilbault for his comments on an earlier version of this
paper, and thank Dan Chen for drawing the picture.

\section{Homology equivalences and fundamental groups}

In this section, we prove Theorem \ref{main} and \ref{main2}. Some
applications are also given. These include the integral homology
localizations, existence of Moore spaces, homology spheres and
high-dimensional knot groups.

\subsection{Proof of Theorems \protect\ref{main} and \protect\ref{main2}}

\begin{proof}[Proof of Theorem \protect\ref{main}]
Suppose that $f:X\rightarrow Y$ is a map such that for any integer $q\geq 2,$
the map $f_{q}:H_{q}(X;R)\rightarrow H_{q}(Y;R)$ is an isomorphism.
According to Hopf's exact sequence (cf.~\cite[Lemma 2.2]{hig}), we have the
following diagram%
\begin{equation*}
\begin{array}{ccc}
H_{2}(X;R) & \twoheadrightarrow & H_{2}(\pi _{1}(X);R) \\ 
\downarrow &  & \downarrow \\ 
H_{2}(Y;R) & \twoheadrightarrow & H_{2}(\pi _{1}(Y);R).%
\end{array}%
\end{equation*}%
Therefore, the group homomorphism $\alpha $ induces a surjection $H_{2}(\pi
_{1}(X);R)\rightarrow H_{2}(G;R)=H_{2}(\pi _{1}(Y);R).$

Conversely, suppose that $\alpha $ induces an epimorphism $H_{2}(\pi
_{1}(X);R)\rightarrow H_{2}(G;R).$ The strategy of constructing $Y$ is
similar to that of \cite[Theorem 1.1]{Ye}. For the group homomorphism $%
\alpha :\pi _{1}(X)\rightarrow G,$ we can construct a CW complex $W$ by
adding $1$-cells and $2$-cells to $X$ such that $\pi _{1}(W)=G$, just as in
the proof of \cite[Theorem 1.1]{Ye}. We consider the homology groups of the
pair $(W,X).$ By Hopf's exact sequence, there is a commutative diagram%
\begin{equation*}
\begin{array}{cccc}
H_{2}(\widetilde{X})\otimes _{\mathbb{Z}[G]}R & \rightarrow & H_{2}(%
\widetilde{W})\otimes _{\mathbb{Z}[G]}R &  \\ 
\downarrow &  & \downarrow j_{4} &  \\ 
H_{2}(X;R) & \overset{j_{2}}{\longrightarrow } & H_{2}(W;R) & \overset{j_{1}}%
{\rightarrow }H_{2}(W,X;R)\rightarrow H_{1}(X;R)\rightarrow H_{1}(W;R). \\ 
\downarrow j_{3} &  & \downarrow j_{5} &  \\ 
H_{2}(\pi ;R) & \overset{\alpha _{\ast }}{\longrightarrow } & H_{2}(G;R) & 
\end{array}%
\end{equation*}%
Since $R$ is a principal ideal domain, the relative homology group $%
H_{2}(W,X;R)$ is a free $R$-module and the image $\mathrm{im}j_{1}$ is also
a free $R$-module. By diagram chasing (cf. \cite[Theorem 1.1]{Ye}, proof of
Theorem 1.1), there is a surjection 
\begin{equation*}
j_{1}\circ j_{4}\colon H_{2}(\widetilde{W})\otimes _{\mathbb{Z}%
[G]}R\rightarrow \mathrm{im}j_{1}.
\end{equation*}%
Note that $R$ is a $G$-dense ring in the sense of \cite{Ye}. Therefore, we
can find a basis $S$ for $\mathrm{im}j_{1}$ in the image of $H_{2}(%
\widetilde{W})\otimes 1.$ Then there are maps $b_{\lambda }:S_{\lambda
}^{2}\rightarrow W$ with $\lambda \in S$ such that the composition of maps 
\begin{equation*}
H_{2}(\vee _{\lambda \in S}S_{\lambda }^{2};R)\rightarrow
H_{2}(W;R)\rightarrow \mathrm{im}j_{1}
\end{equation*}%
is an isomorphism$.$

For each such $\lambda $, attach a 3-cell $(D^{3},S^{2})$ to $W$ along $%
b_{\lambda }$. Let $Y$ denote the resulting space. We see that the diagram 
\begin{equation*}
\begin{array}{ccc}
\vee _{\lambda }S^{2} & \longrightarrow & W \\ 
\downarrow & \ulcorner _{\cdot } & \downarrow \\ 
\vee _{\lambda }D^{3} & \longrightarrow & Y%
\end{array}%
\end{equation*}%
is a pushout diagram. By the van Kampen theorem, the fundamental group of $Y$
is still $G.$ We have a commutative diagram%
\begin{equation*}
\begin{array}{ccccc}
H_{2}(X;R) & \rightarrow & H_{2}(W;R) & \overset{j_{1}}{\rightarrow } & 
H_{2}(W,X;R) \\ 
\downarrow &  & \downarrow &  & \downarrow \\ 
H_{2}(X;R) & \rightarrow & H_{2}(Y;R) & \overset{b}{\rightarrow } & 
H_{2}(Y,X;R)%
\end{array}%
\end{equation*}%
Since the relative homology group $H_{2}(Y,W;R)=H_{2}(\vee _{\lambda
}D^{3},\vee _{\lambda }S^{2};R)=0,$ the map $H_{2}(W;R)\rightarrow
H_{2}(Y;R) $ is surjective. Therefore, the right vertical map induces a
surjection $\mathrm{im}j_{1}\rightarrow \mathrm{im}b$. Denoting by $H_{\ast
}(-)$ the homology groups $H_{\ast }(-;R),$ we have the following
commutative diagram:%
\begin{equation*}
\begin{array}{ccccccc}
\cdots \rightarrow H_{3}(\vee D^{3},\vee S^{2}) & \rightarrow & H_{2}(\vee
S^{2},\mathrm{pt}) & \rightarrow & H_{2}(\vee D^{3},\mathrm{pt}) & 
\rightarrow & H_{2}(\vee D^{3},\vee S^{2}) \\ 
\downarrow &  & \downarrow &  & \downarrow &  & \downarrow \\ 
\cdots \rightarrow H_{3}(Y,W) & \rightarrow & H_{2}(W,X) & \rightarrow & 
H_{2}(Y,X) & \rightarrow & H_{2}(Y,W).%
\end{array}%
\end{equation*}%
Since $H_{2}(\vee S^{2},\mathrm{pt})\rightarrow \mathrm{im}j_{1}$ is an
isomorphism and $H_{2}(\vee D^{3},\mathrm{pt})=0,$ the image $\mathrm{im}%
b=0. $ By a five lemma argument, for any $i\geq 3$ the relative homology
group $H_{i}(Y,X;R)=0.$ The vanishing of these relative homology groups and $%
\mathrm{im}b$ shows that for any $q\geq 2,$ there is an isomorphism $%
H_{q}(X;R)\cong H_{q}(Y;R).$
\end{proof}

\bigskip

The proof of Theorem \ref{main2} is parallel to that of Theorem \ref{main}
in the sense that one adds handles instead of cells. However, in this
situation more efforts are needed to control the normal bundle of the
attaching spheres of the $3$-handles.

\bigskip

\begin{proof}[Proof of Theorem \protect\ref{main2}]
First we may attach $1$- and $2$-handles to the right hand boundary of $%
X\times \lbrack 0,1]$ to obtain an $(n+1)$-dimensional manifold $W_{1}$ such
that $\pi _{1}(W_{1})=G$ and the homomorphism $\pi _{1}(X)\rightarrow \pi
_{1}(W_{1})$ induced by the inclusion $X\hookrightarrow W_{1}$ is $\alpha $.
Note that $W_{1}$ is homotopy equivalent to the complex $W$ constructed in
the proof of Theorem \ref{main}. From the argument of the proof of Theorem %
\ref{main}, we see that $\mathrm{im}\{j\colon H_{2}(W_{1})\rightarrow
H_{2}(W_{1},X)\}$ is a free abelian group, and there is a basis of $\mathrm{%
im}j$ whose elements are spherical, i.e. in the image of 
\begin{equation*}
\pi _{2}(W_{1})\rightarrow H_{2}(W_{1})\rightarrow \mathrm{im}j.
\end{equation*}%
Denote by $X_{1}$ the other boundary component of $W_{1}$. Clearly $\pi
_{2}(W_{1})\cong \pi _{2}(X_{1})$.

If $X$ is spin, then it's well-known that we may choose appropriate framings
of the attaching spheres of the handles such that $W_{1}$ is spin, thus $%
X_{1}$ is also spin. Therefore any embedded $2$-sphere in $X_1$ has trivial
normal bundle and we may attach $3$-handles to obtain $W$ and $Y$ as desired.

In the following, we deal with the case where $X$ is not necessarily spin
but $\ker \{H_{1}(\alpha )\colon H_{1}(\pi _{1}(X);\mathbb{Z})\rightarrow
H_{1}(G;\mathbb{Z})\}$ is torsion free. The key point is to choose
appropriate framings of the attaching spheres of the $2$-handles to ensure
that we may attach the $3$-handles.

Let 
\begin{equation*}
V_{1}=X\times \lbrack 0,1]\cup \bigcup_{i}h_{i}^{1}
\end{equation*}%
be the manifold obtained by attaching $1$-handles and $X^{\prime }$ the
right hand boundary of $V_{1}$. Let 
\begin{equation*}
V_{2}=X^{\prime }\times \lbrack 0,1]\cup \bigcup_{k}h_{k}^{2}
\end{equation*}%
be the result of attaching $2$-handles and $X_{1}$ the the right hand
boundary. Then we get $W_{1}=V_{1}\cup _{X^{\prime }}V_{2}$. In the long
exact sequence 
\begin{equation*}
H_{2}(V_{1},X)\rightarrow H_{2}(W_{1},X)\rightarrow
H_{2}(W_{1},V_{1})\rightarrow H_{1}(V_{1},X)
\end{equation*}%
of the triple $(W_{1},V_{1},X),$ we know that $H_{2}(V_{1},X)=0$ and $%
H_{1}(V_{1},X)$ is torsion free. Therefore, the relative homology group $%
H_{2}(W_{1},X)$ can be viewed as a direct summand of $H_{2}(W_{1},V_{1})%
\cong H_{2}(V_{2},X^{\prime })$. In the long exact sequence 
\begin{equation*}
H_{2}(W_{1})\overset{j}{\rightarrow }H_{2}(W_{1},X)\rightarrow
H_{1}(X)\rightarrow H_{1}(W_{1}),
\end{equation*}%
by assumption, $\mathrm{coker}j\cong \ker \{H_{1}(X)\rightarrow
H_{1}(W_{1})\}=\ker \{H_{1}(\alpha )\colon H_{1}(\pi _{1}(X))\rightarrow
H_{1}(G)\}$ is torsion free. Therefore, the image $\mathrm{im}j$ is
isomorphic to a direct summand of $H_{2}(W_{1},X)$ and hence a direct
summand of $H_{2}(V_{2},X^{\prime })$.

Let the attaching maps of the $2$-handles be 
\begin{equation*}
D_{k}^{2}\times D^{n-1}\supset S^{1}\times D^{n-1}\overset{\varphi _{k}}{%
\hookrightarrow }X^{\prime }.
\end{equation*}%
Then 
\begin{equation*}
X_{1}=(X^{\prime }-\bigcup_{k}\varphi _{k}(S^{1}\times D^{n-1}))\cup
\bigcup_{k}D_{k}^{2}\times S^{n-2}
\end{equation*}%
and we have a canonical basis $\{b_{i}|i=1,\cdots ,m\}$ of $%
H_{2}(V_{2},X^{\prime })$ represented by $D_{k}^{2}\times \{p\}$, where $%
p\in \partial D^{n-1}$ is a fixed point.

Recall that we have elements $x_{1},\cdots ,x_{m}\in \pi _{2}(W_{1})$ such
that $j(x_{1}),\cdots j(x_{m})$ form a basis of $\mathrm{im}j$. Let $%
j(x_{i})=\sum_{k}a_{ik}b_{k}$. We may assume that each $x_i$ is represented
by an embedded $2$-sphere $S^2_i$ in $X_1$, and the intersection of $S^2_i$
with the $2$-handle $h^2_k$ consists of $a_{ik}^{\prime }$ copies of disks $%
D^2_{ik}(1)$, $\cdots$, $D^2_{ik}(a_{ik}^{\prime })$ parallel to the core
disk $D_k^2 \times \{0\}$, where $a_{ik}^{\prime }\equiv a_{ik}\mod 2$.

As seen from the proof of Theorem \ref{main}, we need to attach $3$-handles
along the $2$-spheres $S_{i}^{2}$, which can be done if the normal bundle of
these embedded $2$-spheres are trivial. Note that a stable vector bundle
over $S^{2}$ is determined by its second Stiefel-Whitney class $w_{2}$.
Hence for an embedded $2$-sphere in $W_{1}$, the triviality of its normal
bundle is determined by the evaluation of $w_{2}(W_{1})$ on the homology
class represented by this sphere. That is the following ($\nu $ denotes the
normal bundle of this sphere) 
\begin{equation*}
\langle w_{2}(\nu ),[S_{i}^{2}]\rangle =\langle w_{2}(\nu \oplus
TS^{2}),[S_{i}^{2}]\rangle =\langle w_{2}(W_{1}),x_{i}\rangle .
\end{equation*}

Define a homomorphism $f\colon \mathrm{im}j\rightarrow \mathbb{Z}_{2}$ by $%
f(j(x_{i}))=\langle w_{2}(W_{1}),x_{i}\rangle $. Since $\mathrm{im}j$ is a
direct summand of $H_{2}(V_{2},X^{\prime })$, we can extend $f$ to a
homomorphism $f\colon H_{2}(V_{2},X^{\prime })\rightarrow \mathbb{Z}_{2}$.
Now we rechoose the framing of attaching spheres of the $2$-handles
according to $f(b_{k}),$ as follows. If $f(b_{k})=0$, we keep $\varphi _{k}$
unchanged. If $f(b_{k})=1$, we use the other framing. Denote by $%
W_{1}^{\prime }$ the manifold obtained by using these framing data. Now for
the normal bundle $\nu $, the clutching function along the boundary of $%
D^{2}_{ik}(j)$ ($j=1, \cdots , a_{ik}$) changes if $f(b_k)=1$ and remains
unchanged if $f(b_k)=0$. If a clutching function changes, the evaluation $%
\langle w_2(\nu), [S^2_i]\rangle$ will change by $1$. Therefore 
\begin{equation*}
\langle w_{2}(W_{1}^{\prime }),x_{i}\rangle =\langle
w_{2}(W_{1}),x_{i}\rangle +\sum_{k}a_{ik}^{\prime }f(b_{k})=f(j(x_i)) +
f(j(x_i))=0.
\end{equation*}
Therefore, the normal bundles of the embedded $2$-spheres representing $%
x_{i} $ ($i=1,\cdots ,m$) are trivial and we can attach $3$-handles in the
same manner employed in the proof of Theorem \ref{main}.
\end{proof}

\begin{remark}
The proof only works for the coefficient $R=\mathbb{Z}$. For other
coefficients $R$, even though we know that a basis of $\mathrm{im}j\subset
H_{2}(W_{1},X_{1})\otimes R$ is represented by spheres, we don't know
whether these spheres form a basis of $\mathrm{im}j\subset
H_{2}(W_{1},X_{1}) $ or not. If not, the argument in the above doesn't work
any more.
\end{remark}

Corollary \ref{mcor2} directly follows from Theorem \ref{main2} by noting
that 
\begin{equation*}
H_{1}(f)=H_{1}(\alpha )\colon H_{1}(X)=H_{1}(\pi _{1}(X))\rightarrow
H_{1}(W)=H_{1}(G).
\end{equation*}

\subsection{Applications to homology spheres and Moore manifolds\label{app}}

In this subsection, we give some applications of Theorem \ref{main} and \ref%
{main2}.

Recall the definition of $R$-homology spheres from the introduction.
Corollary \ref{scor1} gives a characterization of the fundamental groups of $%
R$-homology spheres. In order to prove Corollary \ref{scor1}, we need a
lemma. The following result also shows that the CW complex $Y$ in Theorem %
\ref{main} is not unique in general.

\begin{lemma}
\label{killtor}Let $X$ be a simply connected CW complex and $R=\mathbb{Z}/p$
a finite field for some prime $p.$ There exists a simply connected CW
complex $Y$ and an inclusion map $f:X\rightarrow Y$ such that $H_{2}(Y;%
\mathbb{Z})$ is $p$-torsion-free, i.e. $px=0$ implies $x=0$ for $x\in
H_{2}(Y;\mathbb{Z})$, and $f$ induces isomorphism $H_{i}(X;R)\rightarrow
H_{i}(Y;R)$ for any $i\geq 0.$
\end{lemma}

\begin{proof}
Let $S$ be a set of generators for the $p$-torsion elements in $H_{2}(X;%
\mathbb{Z}).$ For each $x\in S,$ attach a $3$-cell to $X.$ We get a new
space $W=X\cup _{\lambda \in S}e_{\lambda }^{3}$. Since $H_{2}(X;R)\cong
H_{2}(X;\mathbb{Z})\otimes_{\mathbb{Z}}R,$ the boundary map $\partial =0$ in
the relative homology exact sequence%
\begin{equation*}
\begin{array}{ccc}
0\rightarrow H_{3}(X;R) & \rightarrow H_{3}(W;R) & \overset{j}{\rightarrow}
H_{3}(W,X;R)\overset{\partial }{\rightarrow }H_{2}(X;R)\rightarrow
H_{2}(W;R)\rightarrow \cdots .%
\end{array}%
\end{equation*}%
Therefore, $j$ is split surjective as an $R$-module homomorphism. Since $%
H_2(W;\mathbb{Z})$ is $p$-torsion-free, the universal coefficient theorem
implies that 
\begin{equation*}
H_{3}(W;R)\cong H_{3}(W;\mathbb{Z})\otimes_{\mathbb{Z}}R.
\end{equation*}%
By the Hurewicz theorem (cf. Hu \cite[Theorem 8.1, p.305]{hu}) and the fact
that tensor product is right exact, the Hurewicz map%
\begin{equation*}
\pi _{3}(W)\otimes_{\mathbb{Z}}R\rightarrow H_{3}(W;\mathbb{Z})\otimes_{%
\mathbb{Z}}R \cong H_{3}(W;R)
\end{equation*}%
is surjective. Using the fact that $\mathbb{Z}/p$ is a $G$-dense ring for
the trivial group (cf. Lemma 2.2 in Ye \cite{Ye}), there exists a set $%
S^{\prime }$ of maps $[g_{\lambda }:S^{3}\rightarrow W]\in \pi _{3}(W)$ such
that the composition%
\begin{equation*}
H_{3}(\vee _{\lambda \in S^{\prime }}S_{\lambda }^{2};R)\rightarrow
H_{3}(W;R)\rightarrow H_{3}(W,X;R)
\end{equation*}%
is isomorphic. For each such a map $g_{\lambda },$ attach a $4$-cell to $W,$
geting a space $Y.$ By the exact relative homology sequence%
\begin{equation*}
\begin{array}{cccc}
\cdots \rightarrow H_{i+1}(Y,X;R) & \rightarrow & H_{i}(W,X;R) & \rightarrow
H_{i}(Y,W;R)\rightarrow H_{i}(Y,X;R)\rightarrow \cdots%
\end{array}%
\end{equation*}%
and a similar diagram chase as that in the proof Theorem \ref{main}, we see
that $H_{i}(Y,X;R)=0$ for any $i\geq 0.$ This shows that the inclusion $%
X\hookrightarrow Y$ induces a homology equivalence with coefficients $R$.
\end{proof}

\bigskip

\begin{proof}[Proof of Corollary \protect\ref{scor1}]
The first part follows directly from Theorem \ref{main} with $f:\pi
_{1}(X)\rightarrow G$ the trivial group homomorphism. When $n>2,$ the
fundamental group $G$ of an $R$-homology $n$-sphere satisfies the condition
that $H_{1}(G;R)=H_{2}(G;R)=0,$ by the Hopf exact sequence (cf. \cite[Lemma
2.2]{hig}).

The $R$-homology $1$-sphere is a special kind of a generalized Moore space $%
M(G,1;R)$ defined in Ye \cite{Ye}. It is proved that a group $G$ is the
fundamental group of a Moore space $M(G,1;R)$ if and only if $H_{2}(G;R)=0$
(cf. \cite[Proposition 4.6.]{Ye}). It follows that a group $G$ is the
fundamental group of an $R$-homology $1$-sphere if and only if $H_{1}(G;R)=R$
and $H_{2}(G;R)=0$.

We consider the case of $n=2.$ By the Hopf exact sequence again, we see that
the condition that $H_{1}(G;R)=0$ and $H_{2}(G;R)$ is a quotient of $R$ is
necessary. Conversely, let $X=K(G,1)$ be a classifying space of $G$ and $%
\alpha :G\rightarrow 1$ a trivial group homomorphism. By Theorem \ref{main},
there is a simply connected CW complex $Y$ and a map $f:X\rightarrow Y$
inducing an $R$-homology equivalence. Note that the coefficients $R$ is a
principal ideal domain and there is an $R$-epimorphism 
\begin{equation*}
R\rightarrow H_{2}(G;R)\cong H_{2}(Y;R)\cong \pi _{2}(Y)\otimes _{\mathbb{Z}%
}R.
\end{equation*}%
By applying Lemma \ref{killtor} if necessary, we may assume that $\pi
_{2}(Y)\cong H_{2}(Y;\mathbb{Z})$ is a cyclic group. Choose $\eta
:S^{2}\rightarrow Y$ as a generator of $\pi _{2}(Y)$. Viewing $f$ as a
fibration by replacing $X$ by the path space $E_{f}$ (still denoted by $X$
without confusions), we let $K$ denote the pullback the following diagram%
\begin{equation*}
\begin{array}{ccc}
K & \rightarrow  & X \\ 
\downarrow  &  & \downarrow f \\ 
S^{2} & \overset{\eta }{\rightarrow } & Y.%
\end{array}%
\end{equation*}%
Denote by $F$ the homotopy fiber of $f.$ By the commutative diagram%
\begin{equation*}
\begin{array}{ccccccc}
\cdots \rightarrow \pi _{2}(S^{2}) & \rightarrow  & \pi _{1}(F) & 
\rightarrow  & \pi _{1}(K) & \rightarrow  & \pi _{1}(S^{2})=1 \\ 
\downarrow  &  & \downarrow  &  & \downarrow  &  & \downarrow  \\ 
\cdots \rightarrow \pi _{2}(Y) & \rightarrow  & \pi _{1}(F) & \rightarrow  & 
\pi _{1}(X)=G & \rightarrow  & \pi _{1}(Y)=1,%
\end{array}%
\end{equation*}%
we see that $\pi _{1}(K)\rightarrow G$ is an isomorphism. By the Serre
spectral sequence, we see that $H_{\ast }(F;R)=H_{\ast }(\mathrm{pt};R).$
Using the Serre spectral sequence again, the map $K\rightarrow S^{2}$
induces an $R$-homology equivalence. This finishes the proof.
\end{proof}

\begin{remark}

\begin{enumerate}
\item[(i)] The existence of $\mathbb{Z}$-homology 2-sphere is actually
already contained in Dror \cite[proof of Theorem 3.2, p.122]{dror}.

\item[(ii)] Although Kervaire \cite{kar} proves that every finitely
presented group $G$ with $H_{1}(G;\mathbb{Z})=H_{2}(G;\mathbb{Z})=0$ could
be realized as the fundamental group of a $\mathbb{Z}$-homology sphere $%
M^{n} $ (a closed manifold) when $n\geq 5,$ Hausmann and Weinberger \cite{hw}
show that it is not true for $n=4.$

\item[(iii)] The authors don't know whether every finitely presented group
with the condition in Corollary \ref{scor1} (ii) could be realized as a
finite $R$-homology $2$-sphere.
\end{enumerate}
\end{remark}

Recall from \cite{ann} that for a given integer $n\geq 1$ and a group $G$
(abelian if $n\geq 2$), a Moore space $M(G;n)$ is a CW complex $X$ such that
the homotopy group $\pi _{j}(X)=0$ for $j<n$, $\pi _{n}(X)=G$ and the
homology group $H_{i}(X;Z)=0$ for each $i>n.$ As analogues of Moore spaces,
we define Moore manifolds as follows. Let $k$ be a positive integer and $G$
a finitely presented group. When $k\geq 2,$ we assume further that $G$ is
abelian.

\begin{definition}
Let $n,k$ be two positive integers. An $n$-dimensional Moore manifold $%
M^{n}(G,k)$ is an orientable closed manifold $X$ such that for any integer $%
i<k,$ the homotopy group $\pi _{i}(X)=0,$ $\pi _{k}(X)=G$ and for any
integer $k<i\leq (n+1)/2,$ the homology group $H_{i}(X;\mathbb{Z})=0.$
\end{definition}

When $k>[(n+1)/2],$ by Poincar\'{e} duality, Moore manifold $M^{n}(G,k)$
only exists when $G=1,$ the trivial group. In this case, $M^{n}(G,k)$ is the
standard sphere. Therefore, in the remainder of this article, we always
assume $k\leq (n+1)/2.$

Similar to the existence of Moore spaces $M(G,1)$ in Varadarajan \cite{ann},
we give a characterization of Moore manifolds $M^{n}(G,1),$ as follows.

\begin{proposition}
\label{prop}Let $n\geq 5$ and $G$ a finitely presented group. There exists a
Moore manifold $M^{n}(G,1)$ if and only if $H_{2}(G;\mathbb{Z})=0.$
\end{proposition}

\begin{proof}
The necessary condition follows easily Hopf's exact sequence%
\begin{equation*}
\pi _{2}(M^{n}(G,1))\rightarrow H_{2}(M^{n}(G,1);\mathbb{Z})\rightarrow
H_{2}(G;\mathbb{Z})\rightarrow 0.
\end{equation*}%
Conversely when $H_{2}(G;\mathbb{Z})=0,$ we choose $X=S^{n}$ and $\alpha
:1\rightarrow G$ the trivial group homomorphism. By Theorem \ref{main2},
there exists an orientable closed manifold $Y,$ which is obtained from $X$
by adding 1-handles, 2-handles and 3 handles, such that $\pi _{1}(Y)=G$ and
the inclusion $X\hookrightarrow W,$ the cobordism between $X$ and $Y,$
induces that for any integer $q\geq 2,$ the relative homology group $%
H_{q}(W,X;\mathbb{Z})=0.$ By the universal coefficients theorem and Poincar%
\'{e} duality, for each $q\geq 2,$ there is an isomorphism $H^{q}(W,X;%
\mathbb{Z})=H_{n+1-q}(W,Y;\mathbb{Z})=0.$ This implies that for any integer $%
2\leq i\leq n-2,$ the homology group $H_{i}(Y;\mathbb{Z})=H_{i}(X;\mathbb{Z}%
)=0.$ By the assumption that $n\geq 5,$ we have $n-2\geq \lbrack (n+1)/2].$
Therefore, such $Y$ is a Moore manifold $M^{n}(G,1).$
\end{proof}

\begin{proposition}
Let $n\geq 5,$ $k<(n-1)/2$ and $G$ a finitely generated abelian group. There
exists a Moore manifold $M^{n}(G,k).$
\end{proposition}

\begin{proof}
Without loss of generality, suppose that $G=\mathbb{Z}/t$ for some integer $%
t.$ When $t=0,$ assume $G=\mathbb{Z}$. The general Moore manifold can be
obtained as connected sum of such manifolds $M^{n}(\mathbb{Z}/t,k).$ Take $%
X=S^{k}\times S^{n-k}.$ Let $f:S^{k}\rightarrow X$ be an embedding
representing the element $[t]\in \mathbb{Z=\pi }_{k}(X).$ Since $X$ is
parallelizable, $f(S^{k})$ has a trivial normal bundle in $X.$ Extend $f$ to
an embedding $\tilde{f}:S^{k}\times D^{n-k}\rightarrow X.$ Doing surgery on $%
X$ along $\tilde{f}$, the resulting manifold is denoted by $Y$. Suppose that
the surgery trace is $W.$ It is not hard to see that $H_{k}(W;\mathbb{Z})=G$%
, and the homology group $H_{i}(W)=0$ for any integer $k<i<(n+1)/2$. Since $%
W\simeq Y\cup e^{n-k}$ and $n-k>(n+1)/2>k+1,$ the manifold $Y$ has the same
homology groups as $W$ up to the middle dimension. This shows that $Y$ is a
Moore space $M^{n}(G,k).$
\end{proof}

\begin{remark}

\begin{enumerate}
\item For an integer $k$ close to $(n+1)/2$ the manifold $M^{n}(G,k)$ may
not exits, see the Corollary before Lemma F in Barden \cite{bar}.

\item Hausmann and Weinberger \cite{hw} constructed a superperfect group $G$
for which any $4$-manifold $Y$ with $\pi _{1}(Y)=G$ satisfies $\chi (Y)>2.$
This implies that Proposition \ref{prop} does not hold for $n=4.$
\end{enumerate}
\end{remark}

As another application of Theorem \ref{main2}, the following result is an
improvement of Corollary 1.3 in \cite[Theorem 1.1]{Ye2}.

\begin{corollary}
\label{integral}Let $n\geq 5$ and $X$ be a closed $n$-dimensional manifold
with fundamental group $\pi $ and $N$ a normal subgroup of $\pi .$ The
following are equivalent:

\begin{enumerate}
\item[(i)] There is a closed manifold $Y$ obtained from $X$ by adding $2$%
-handles and $3$-handles with $\pi _{1}(Y)=\pi /N$ such that for any $q\geq
0 $ there is an isomorphism%
\begin{equation*}
H_{q}(Y;\mathbb{Z})\cong H_{q}(X;\mathbb{Z}).
\end{equation*}

\item[(ii)] The group $N$ is normally generated by some finite number of
elements and is a relatively perfect subgroup of $\pi ,$ \textsl{i.e.} $[\pi
,N]=N.$
\end{enumerate}
\end{corollary}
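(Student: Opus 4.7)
The plan is to deduce Corollary~\ref{integral} from Theorem~\ref{main2} applied to the quotient $\alpha\colon \pi\to \pi/N$, together with Corollary~\ref{mcor2}; the remaining work is bookkeeping with the five-term exact sequence of $1\to N\to \pi\to \pi/N\to 1$.

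For the implication (ii)~$\Rightarrow$~(i), I set $G=\pi/N$ and take $\alpha$ to be the quotient. Since $\pi$ is finitely presented (as $\pi_1$ of a closed manifold) and $N$ is normally generated by finitely many elements, $G$ is finitely presented. Feeding the hypothesis $N=[N,\pi]$ into the five-term exact sequence
$$H_2(\pi)\to H_2(G)\to N/[N,\pi]\to H_1(\pi)\to H_1(G)\to 0$$
immediately yields that $H_2(\alpha)$ is surjective and $H_1(\alpha)$ is an isomorphism; in particular $\ker H_1(\alpha)=0$ is trivially torsion-free, so Theorem~\ref{main2} applies and produces a cobordism $(W;X,Y)$ with $\pi_1(W)=G$ and $H_q(X)\cong H_q(W)$ for $q\geq 2$ (the $q=1$ isomorphism is automatic from $H_1(\alpha)$). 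Corollary~\ref{mcor2} then identifies $(W;X,Y)$ as a one-sided $\mathbb{Z}$-homology cobordism, so $\pi_1(Y)\cong G$ and $H_q(Y)\cong H_q(W)\cong H_q(X)$ for every~$q$.

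For the converse (i)~$\Rightarrow$~(ii), I start from the handle decomposition of the cobordism $W$ from $X$ to $Y$ using only $2$- and $3$-handles. A standard van~Kampen argument identifies the (finitely many) attaching circles of the $2$-handles, viewed as elements of $\pi_1(X)=\pi$, with a set of normal generators of $N=\ker(\pi\to \pi_1(Y))$, giving normal finite generation of $N$. For the relation $N=[N,\pi]$, I use Hopf's exact sequence naturally in $X\to Y$: the hypothesis $H_1(X)\cong H_1(Y)$ forces $H_1(\pi)\to H_1(\pi/N)$ to be an isomorphism, and $H_2(X)\cong H_2(Y)$ together with the Hopf surjections $H_2(X)\twoheadrightarrow H_2(\pi)$ and $H_2(Y)\twoheadrightarrow H_2(\pi/N)$ forces $H_2(\pi)\to H_2(\pi/N)$ to be surjective. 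Plugging both back into the five-term sequence then gives $N/[N,\pi]=0$.

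The one step that is not purely formal is checking, in the (ii)~$\Rightarrow$~(i) direction, that the cobordism $W$ produced by Theorem~\ref{main2} involves no $1$-handles, so that $Y$ really is obtained from $X$ by adding $2$- and $3$-handles only. This requires looking inside the proof of Theorem~\ref{main2} rather than using it as a black box: the $1$-handles appearing there are used solely to enlarge $\pi_1(X)$ to $G$, and since here $\alpha$ is already surjective they can be omitted, leaving only the $2$-handles (to kill $N$) and the $3$-handles (to correct $H_2$).
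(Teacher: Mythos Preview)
Your proof is correct and follows the same approach as the paper: the key observation is that when $N$ is relatively perfect the map $H_1(\alpha)$ is an isomorphism, so $\ker H_1(\alpha)=0$ is trivially torsion free and Theorem~\ref{main2} applies without any spin hypothesis. The paper's own proof is a single sentence making exactly this point and deferring the remaining details to \cite{Ye2}; you have simply supplied those details (the five-term sequence, the converse direction, and the absence of $1$-handles) explicitly.

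One small expository point: invoking Corollary~\ref{mcor2} to ``identify'' the cobordism produced by Theorem~\ref{main2} as one-sided is slightly awkward, since Corollary~\ref{mcor2} is an existence statement rather than a criterion for a given cobordism. What you really need is that $H_*(W,X;\mathbb{Z})=0$ forces $H_*(W,Y;\mathbb{Z})=0$; since (after suppressing the $1$-handles) the relative chain complex $C_*(W,X)$ is free and concentrated in degrees $2$ and $3$ with acyclic homology, its boundary map is an isomorphism of free $\mathbb{Z}$-modules, and the dual handle decomposition gives $C_*(W,Y)$ concentrated in degrees $n-2,\,n-1$ with boundary map the transpose, hence also an isomorphism. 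This, together with $\pi_1(Y)\cong\pi_1(W)$ (immediate since the dual handles have index $\geq 3$), gives $H_q(Y)\cong H_q(W)\cong H_q(X)$ and $\pi_1(Y)\cong\pi/N$ directly.
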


\begin{proof}
Compared with Corollary 1.3 of \cite{Ye2}, we drop the condition that $X$ is
spin here, since $H_{1}(\pi ;\mathbb{Z})\rightarrow H_{1}(\pi /N;\mathbb{Z})$
is an isomorphism and Theorem \ref{main2} applies.
\end{proof}

\section{Quillen's plus construction with a given Whitehead torsion\label{qm}%
}

In this section, we introduce Quillen's plus constructions with given
Whitehead torsions for both CW complexes and manifolds. Theorem \ref{th3} is
proved at the end of this section.

\subsection{Plus construction with torsions for CW complexes}

Let $X$ be a CW-complex and $P\lhd \pi _{1}(X)$ a perfect normal subgroup,
normally generated by finitely many elements. Then Quillen's plus
construction is a CW-complex $X^{+}$ containing $X$ as a subcomplex such
that $i_{\ast }\colon \pi _{1}(X)\rightarrow \pi _{1}(X^{+})$ is the
projection $\pi _{1}(X)\rightarrow \pi _{1}(X)/P$ and the pair $(X^{+},X)$
is homologically acyclic, i.e. $H_{\ast }(X^{+},X;A)=0$ for any $\pi
_{1}(X)/P$-module $A$. Especially, we have $H_{\ast }(\widetilde{X^{+}},%
\overline{X})=0$, where $\widetilde{X^{+}}$ is the universal cover of $X^{+}$
and $\overline{X}$ is the corresponding covering space of $X$. Therefore,
there is a well-defined torsion $\tau (X^{+},X)\in \mathrm{Wh}(G)$ ($G=\pi
_{1}(X)/P$) of the pair $(X^{+},X)$ (cf. Remark 2 of \cite[p.~378]{Milnor}).

\begin{theorem}
\label{torsion} Given an element $\tau \in \mathrm{Wh}(G)$, there exists a
plus construction $X^{+}$ of $X$ such that $\tau (X^{+},X)=\tau $. If there
is another $X_{1}^{+}$ with the same property, then there is a simply
homotopy equivalence $f\colon X^{+}\rightarrow X_{1}^{+}$ which is homotopic
to the identity on $X$.
\end{theorem}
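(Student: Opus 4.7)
The plan is to treat existence and uniqueness separately. For existence I would first build a reference plus construction $X_{0}^{+}$ by Quillen's classical procedure and then adjust its torsion by attaching pairs of canceling high-dimensional cells to realize any prescribed element of $\mathrm{Wh}(G)$. For uniqueness I would use the extension property of the plus construction to produce a homotopy equivalence $f\colon X^{+}\to X_{1}^{+}$ fixing $X$, and then invoke the additivity formula for Whitehead torsion on a triple to conclude $\tau(f)=0$.

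For existence, first attach finitely many $2$-cells along a normal generating set of $P$ and then $3$-cells to kill the resulting second homology over $\mathbb{Z}[G]$; this yields a standard plus construction $X_{0}^{+}$ with some torsion $\tau_{0}=\tau(X_{0}^{+},X)\in\mathrm{Wh}(G)$. Given the target $\tau$, represent $\tau-\tau_{0}$ by an invertible matrix $M\in GL_{n}(\mathbb{Z}[G])$. Enlarge $X_{0}^{+}$ by wedging on $n$ trivially attached $k$-cells (with $k\geq 3$, so $\pi_{1}$ is unchanged) and then attaching $n$ $(k+1)$-cells whose attaching maps realize $M$ as the cellular boundary $C_{k+1}\to C_{k}$ in the universal cover. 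The added block is a homologically acyclic pair with torsion $[M]\in\mathrm{Wh}(G)$, and by the sum formula applied to $X\subset X_{0}^{+}\subset X^{+}$,
\[
\tau(X^{+},X)=\tau(X^{+},X_{0}^{+})+\iota_{\ast}\tau(X_{0}^{+},X)=[M]+\tau_{0}=\tau,
\]
as desired.

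For uniqueness, suppose $X_{1}^{+}$ is another plus construction with the same torsion $\tau$. The universal property of the plus construction (obstructions to extending $\mathrm{id}_{X}$ over the cells of $X^{+}\smallsetminus X$ vanish because $(X_{1}^{+},X)$ is acyclic and $\pi_{1}(X_{1}^{+})=G$ kills $P$) yields a cellular extension $f\colon X^{+}\to X_{1}^{+}$ of $\mathrm{id}_{X}$. A five-lemma comparison of the long exact sequences of $(X^{+},X)$ and $(X_{1}^{+},X)$ in their universal covers shows that $f$ induces isomorphisms $H_{\ast}(\widetilde{X^{+}})\to H_{\ast}(\widetilde{X_{1}^{+}})$, so by Whitehead's theorem $f$ is a homotopy equivalence. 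Applying the sum formula to the triple $X\subset X^{+}\subset M_{f}\simeq X_{1}^{+}$ gives
\[
\tau(X_{1}^{+},X)=\tau(f)+\iota_{\ast}\tau(X^{+},X),
\]
where $\iota\colon X^{+}\hookrightarrow M_{f}$ is a homotopy equivalence and so acts as the identity on $\mathrm{Wh}(G)$. The hypothesis $\tau(X_{1}^{+},X)=\tau(X^{+},X)$ therefore forces $\tau(f)=0$, i.e.\ $f$ is a simple homotopy equivalence.

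The main obstacle is the geometric realization step in the existence part: one must check that an arbitrary invertible matrix $M\in GL_{n}(\mathbb{Z}[G])$ arises as the cellular boundary map coming from attaching $n$ $(k+1)$-cells to $X_{0}^{+}\vee\bigvee_{n}S^{k}$, and that the resulting pair has torsion exactly $[M]$. This is classical in simple homotopy theory and ultimately comes down to the freeness of $\pi_{k}$ of a wedge of $k$-spheres as a $\mathbb{Z}[G]$-module (for $k\geq 3$) together with the standard identification of the torsion of an elementary algebraic subcomplex, but it is the technical heart of the argument.
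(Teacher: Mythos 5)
Your uniqueness argument is essentially the paper's: use the known homotopy equivalence $f\colon X^{+}\to X_{1}^{+}$ rel $X$ and the additivity/sum formula for the triple $(X_{1}^{+},X^{+},X)$ to force $\tau(X_{1}^{+},X^{+})=0$. The paper phrases this via the short exact sequence of relative chain complexes; you phrase it via the mapping cylinder; these are the same computation.

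Your existence argument, however, takes a genuinely different route. The paper builds the torsion in directly at the level of the $2$- and $3$-cells: it represents $\tau$ by an $N\times N$ matrix $A$, attaches $N$ $2$-cells (the needed ones along normal generators of $P$, the rest trivially), lifts the rows of $A$ to $\pi_{2}$, and attaches $3$-cells so that the cellular boundary $C_{3}\to C_{2}$ of $(\widetilde{X^{+}},\overline{X})$ is exactly $A$. Thus $X^{+}$ is built from $X$ by adding only $2$- and $3$-cells and achieves $\tau(X^{+},X)=\tau$ in one step. You instead first build a reference $X_{0}^{+}$ with torsion $\tau_{0}$ and then correct the discrepancy $\tau-\tau_{0}$ by an elementary acyclic block of $k$- and $(k+1)$-cells ($k\geq3$) realizing an invertible matrix $M$, invoking the sum formula $\tau(X^{+},X)=[M]+\tau_{0}$. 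Both are correct; yours is more modular (standard plus construction plus a torsion adjustment in the spirit of Milnor's existence theorem for $h$-cobordisms), and you correctly flag the geometric realization of $M$ as the technical point. The paper's one-step approach has the advantage that the relative cells stay in dimensions $2$ and $3$, which matters later for the manifold version (Theorem \ref{mfd}) where one wants a one-sided $h$-cobordism built from $2$- and $3$-handles only; with your high-dimensional adjustment block one would need to redo that translation for $k$- and $(k+1)$-handles.

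One small point to make explicit in your existence step: after wedging on the trivially attached $k$-spheres, the attaching maps of the $(k+1)$-cells must be chosen in $\pi_{k}$ so that their images in $H_{k}(\widetilde{Y},\widetilde{X_{0}^{+}})\cong\mathbb{Z}[G]^{n}$ hit the rows of $M$. This uses that $\pi_{k}(Y)\to H_{k}(\widetilde{Y},\widetilde{X_{0}^{+}})$ is surjective (relative Hurewicz, or more concretely: an attaching sphere can be slid to wrap around loops representing $\pi_{1}$, so all of $\mathbb{Z}[G]^{n}$ is spherical). This is the same lifting-to-$\pi_{2}$ step the paper performs, just one dimension up, and it is worth saying so that the ``realize $M$ as the boundary map'' claim is not left as an assertion.
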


This is a stronger version of the existence and uniqueness of the plus
construction (cf. \cite[Theorem 5.2.2]{Rosenberg}).

\begin{proof}
We just need to modify the ordinary plus construction to take into account
the torsion issue. Let $\tau \in \mathrm{Wh}(G)$ be represented by a matrix $%
A=(a_{ij})$ of size $N$ for a larger integer $N$. Let $Y$ be obtained by
attaching $k$ $2$-cells $e_{i}^{2}$ on $X$ to have the fundamental group $G$
and $(N-k)$ $2$-cells with trivial attaching maps. Let $\widetilde{Y}$ be
the universal cover of $Y$ and $\overline{X}$ be the corresponding cover of $%
X$. Then the relative homology groups of $(\widetilde{Y},\overline{X})$
concentrate in dimension $2$ and the homomorphism 
\begin{equation*}
j\colon \pi _{2}(Y)\cong H_{2}(\widetilde{Y})\rightarrow H_{2}(\widetilde{Y},%
\overline{X})=\bigoplus_{i=1}^{N}\mathbb{Z}[G][e_{i}^{2}]
\end{equation*}%
is surjective since $H_{1}(\overline{X})=H_{1}(\pi )=0$. Therefore we may
choose $x_{1},\ldots ,x_{N}\in \pi _{2}(Y)$ such that when expressed in the
canonical basis $[e_{i}^{2}]$, the coefficients of $j(x_{i})$ are the $i$-th
row of $A$. Using these $x_{i}$ as attaching data we form 
\begin{equation*}
X^{+}=Y\cup \cup e_{i}^{3}.
\end{equation*}%
Now the chain complex $C_{\ast }(\widetilde{X^{+}},\overline{X})$
concentrates in dimension $2$ and $3$, and 
\begin{equation*}
C_{3}(\widetilde{X^{+}},\overline{X})=H_{3}(\widetilde{X^{+}},\widetilde{Y}%
)=\bigoplus \mathbb{Z}[G][e_{i}^{3}]
\end{equation*}%
\begin{equation*}
C_{2}(\widetilde{X^{+}},\overline{X})=H_{2}(\widetilde{Y},\overline{X}%
)=\bigoplus \mathbb{Z}[G][e_{i}^{2}]
\end{equation*}%
and the boundary map $\partial \colon C_{3}(\widetilde{X^{+}},\overline{X}%
)\rightarrow C_{2}(\widetilde{X^{+}},\overline{X})$ is just the boundary map 
$\partial \colon H_{3}(\widetilde{X^{+}},\widetilde{Y})\rightarrow H_{2}(%
\widetilde{Y},\overline{X})$ in the long exact sequence of the triple $(%
\widetilde{X^{+}},\widetilde{Y},\overline{X})$. Hence by construction, the
pair $(X^{+},X)$ is homologically acyclic and the torsion of $C_{\ast }(%
\widetilde{X^{+}},\overline{X})$ is represented by $A$, which equals to $%
\tau $.

For the uniqueness of $X^{+}$, it is shown that there exists a homotopy
equivalence $f\colon X^{+}\rightarrow X_{1}^{+}$ which is homotopic to the
identity on $X$ (cf. \cite[Theorem 5.2.2]{Rosenberg}). There is a short
exact sequence of chain complexes 
\begin{equation*}
0\rightarrow C_{\ast }(\widetilde{X^{+}},\overline{X})\rightarrow C_{\ast }(%
\widetilde{X_{1}^{+}},\overline{X})\rightarrow C_{\ast }(\widetilde{X_{1}^{+}%
},\widetilde{X^{+}})\rightarrow 0
\end{equation*}%
obtained from the triple $(X_{1}^{+},X^{+},X)$. From the additivity of the
Whitehead torsion \cite[Theorem 3.1]{Milnor}, we have $\tau
(X_{1}^{+},X)=\tau (X^{+},X)+\tau (X_{1}^{+},X^{+})$. Therefore $\tau
(X_{1}^{+},X^{+})=0$, which implies that $f$ is a simple homotopy
equivalence.
\end{proof}

\subsection{Embedding manifold plus construction with torsions}

In the case that $X$ is a manifold $M^{n}$ ($n\geq 5$), it is shown in \cite[%
Theorem 4.1]{gt} that the plus construction can be done in the world of
manifolds and one obtains a one-sided $h$-cobordism $(W;M,M_{2})$ (i.e. the
inclusion $M_{2}\hookrightarrow W$ is a homotopy equivalence) such that the
Whitehead torsion of $(W,M_{2})$ is trivial. In this section we generalize
the manifold plus construction as following.

\begin{theorem}
\label{mfd} Let $M^{n}$ be a manifold of dimension $n\geq 5$, $P\lhd \pi
_{1}(M)$ a perfect normal subgroup normally generated by finitely many
elements. Let $\tau \in \mathrm{Wh}(G)$ ($G=\pi _{1}(M)/P$) be an arbitrary
element. Then there is a one-sided $h$-cobordism $(W;M,M_{2})$ such that $W$
is the plus construction of $M$ corresponding to $P$ and the Whitehead
torsion $\tau (W,M_{2})=\tau $. Furthermore, $W$ is unique up to
diffeomorphism rel $M $.
\end{theorem}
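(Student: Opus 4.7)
The plan is to mimic the construction of Theorem \ref{torsion} in the smooth category, replacing cells by handles while dealing with two manifold-theoretic subtleties: framings on the attaching spheres of the 2-handles, and the Poincar\'e--Lefschetz duality relating $\tau(W,M)$ to $\tau(W,M_2)$. Concretely, I would first represent the prescribed $\tau$ (or, to compensate for the switch between $\tau(W,M)$ and $\tau(W,M_2)$, its Poincar\'e--Lefschetz dual $\pm\overline{\tau}$) by an invertible $N\times N$ matrix $A$ over $\mathbb{Z}[G]$. I then attach to $M\times[0,1]$ a collection of $N$ 2-handles: finitely many whose attaching circles represent the normal generators of $P$, plus the remaining handles attached along null-homotopic circles; by van Kampen the resulting cobordism $V$ has $\pi_1(V)=G$. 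Because $P$ is perfect, $P\subset[\pi_1(M),\pi_1(M)]$, so $H_1(\alpha)\colon H_1(\pi_1(M))\to H_1(G)$ is an isomorphism and in particular has torsion-free kernel. This lets me apply the framing-adjustment argument from the proof of Theorem \ref{main2} verbatim to reselect framings of the 2-handles so that the embedded 2-spheres $S_i^2\subset\partial V$ representing the rows of $A$ in $H_2(\widetilde V,\widetilde M)$ have trivial normal bundle; attaching 3-handles along these framed spheres yields the desired $W$.

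Next I would verify that $(W;M,M_2)$ has the prescribed properties. By construction the relative chain complex $C_*(\widetilde W,\widetilde M)$ concentrates in degrees $2$ and $3$, each free of rank $N$ over $\mathbb{Z}[G]$, with boundary given by $A$; this shows simultaneously that $(W,M)$ is homologically acyclic, so $W$ is a plus construction of $M$, and that $\tau(W,M)=[A]$. Poincar\'e--Lefschetz duality for manifolds with boundary then converts acyclicity of $(W,M)$ into acyclicity of $(W,M_2)$, and combined with the van Kampen computation this promotes to the homotopy equivalence $M_2\hookrightarrow W$. The same duality identifies $\tau(W,M_2)$ with $(-1)^{n+1}\overline{\tau(W,M)}$, which by the initial choice of $A$ equals the prescribed $\tau$.

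For the uniqueness of $W$ up to diffeomorphism rel $M$---the step I expect to be the main obstacle---I would start from the simple homotopy equivalence $f\colon W\to W_1$ rel $M$ provided by Theorem \ref{torsion} at the CW level, and upgrade it to a diffeomorphism via the $s$-cobordism theorem. Form $V:=W\cup_M W_1$ as a cobordism between $M_2$ and $M_2'$: van Kampen gives $\pi_1(V)=G$, while excision plus the acyclicity of both plus constructions makes both inclusions $M_2\hookrightarrow V$ and $M_2'\hookrightarrow V$ homotopy equivalences, so $V$ is an $h$-cobordism. Additivity of Whitehead torsion combined with the duality identification and the equality $\tau(W,M_2)=\tau=\tau(W_1,M_2')$ yields $\tau(V,M_2)=0$, so since $\dim M_2=n\geq 5$ the $s$-cobordism theorem produces a diffeomorphism $V\cong M_2\times[0,1]$. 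The delicate final point will be to arrange this product structure so that the bicollar of $M$ in $V$ corresponds to a level slice, which I would handle via uniqueness of tubular neighborhoods of $M$ in $V$ and an isotopy extension argument to recover the required diffeomorphism $W\cong W_1$ rel $M$.
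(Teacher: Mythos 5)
Your existence argument is sound and even streamlines the paper's treatment: since $P$ is perfect, $H_1(\pi_1(M))\to H_1(G)$ is an isomorphism, so the framing-reselection argument from Theorem~\ref{main2} applies directly to the spheres $x_1,\ldots,x_N$ realizing the rows of $A$ (their images under augmentation form a $\mathbb{Z}$-basis of $H_2(W_1,M)$ because the augmented matrix remains invertible). The paper instead isolates a separate Lemma~\ref{spin}, representing a generator of the relevant summand of $H_2(W_1)$ by an embedded surface via $\Omega^{SO}_2(W_1)\twoheadrightarrow H_2(W_1)$ and adjusting the clutching function of its normal bundle; both routes reach the same conclusion.

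The uniqueness argument, however, has a genuine gap. Forming $V=W\cup_M W_1$ and chasing torsions via excision and duality gives
\begin{equation*}
\tau(M_2\hookrightarrow V)=\tau(W,M_2)+\tau(W_1,M)=\tau+(-1)^n\overline{\tau},
\end{equation*}
which is \emph{not} zero for a general $\tau\in\mathrm{Wh}(G)$ (it vanishes only for $\tau$ anti-self-conjugate up to sign). So $V$ is an $h$-cobordism but not an $s$-cobordism, and the $s$-cobordism theorem cannot be applied to it directly. The paper resolves this by stacking an auxiliary $h$-cobordism rel boundary $(V';X,X_1)$ on $X=W\cup_M W'$ with $\tau(X\hookrightarrow V')=(-1)^{n-1}\overline{\tau}$, realized via Milnor's existence theorem; after this the torsion $\tau(M_2\hookrightarrow X_1)$ becomes $0$ and, crucially, also $\tau(W\hookrightarrow V')=0$, so the $s$-cobordism theorem \emph{rel boundary} for $V'$ produces a diffeomorphism $W\cong W'$ fixing $M$. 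This also sidesteps your second worry — making a product structure on $V$ respect the middle slice $M$ — because the rel-boundary $s$-cobordism theorem directly outputs the diffeomorphism of $W$ to $W'$ rather than a product structure that must then be straightened; your proposed fix via uniqueness of tubular neighborhoods does not obviously work, since a product structure on $V$ coming from the $s$-cobordism theorem need not carry $M\subset\mathrm{int}\,V$ to a level set.
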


Actually the existence part of this theorem can be deduced from a
combination of \cite[Theorem 4.1]{gt} and \cite[Existence Theorem 11.1]%
{Milnor}. However, the proof given below shows that in the process of
attaching handles, we can control the homotopy type and the Whitehead
torsion simultaneously. Therefore, it can be viewed as a generalization both
of \cite[Theorem 4.1]{gt} and of \cite[Existence Theorem 11.1]{Milnor}.

First we need to generalize the technique of \cite{gt} so that it is
applicable to the construction given in the proof of Theorem \ref{torsion}.

Let $M^{n}$ be a smooth manifold of dimension $n\geq 5$, $S \subset M$ be an
embedded $S^{1}$ which is null-homologous. Since $\langle
w_{1}(M),[S]\rangle =0$, the normal bundle of $S$ in $M$ is trivial. There
are essentially two framings of this normal bundle. Let $W^{n+1}$ be the
result of attaching a $2$-handle to the right hand boundary of $M\times
\lbrack 0,1]$ using $S$ as the attaching sphere, then $W \simeq M \cup e^2$,
and the natural projection $j \colon H_2(W) \to H_2(W,M) \cong \mathbb{Z}$
is surjective. Let $B\subset H_{2}(W)$ be any direct summand of $H_{2}(W)$,
mapped isomorphically to $H_{2}(W,M)$ under $j$.

\begin{lemma}
\label{spin} There exists a framing of the normal bundle of $S$ such that
for the resulting manifold $W$, the evaluation of the second Stiefel-Whitney
class $w_{2}(W)$ on $B$ is trivial. (Since $W$ is canonically homotopy
equivalent to $M\cup e^{2}$, we may identify the homology groups of $W$
obtained using different framings.)
\end{lemma}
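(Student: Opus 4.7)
The plan is to exhibit an explicit generator of $B$, show that the dependence of $\langle w_2(W),\cdot\rangle$ on the framing is concentrated on that generator, and then verify that toggling the framing flips the value by $1\in\mathbb{Z}/2$; since $B\cong\mathbb{Z}$, this immediately yields the lemma.

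First I would produce a canonical $2$-cycle. Because $[S]=0\in H_1(M)$, the circle $S$ bounds an oriented (possibly immersed) surface $F\subset M$. Capping $F$ with the core disk $D^2\times\{0\}$ of the attached $2$-handle produces a closed $2$-cycle $\Sigma\subset W$ whose class projects to a generator of $H_2(W,M)\cong\mathbb{Z}$. By the long exact sequence of $(W,M)$, after possibly negating we may write a generator $[b]$ of $B$ as $[b]=[\Sigma]+i_*[\gamma]$ for some $[\gamma]\in H_2(M)$, where $i\colon M\hookrightarrow W$ is the inclusion. Since the normal line bundle of $M$ in $W$ is trivial, $i^*w_2(W)=w_2(M)$, so
\[
\langle w_2(W),[b]\rangle \;=\; \langle w_2(W),[\Sigma]\rangle + \langle w_2(M),[\gamma]\rangle.
\]
Only the first summand depends on the framing of the normal bundle of $S$.

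The main step is the framing comparison. Let $W_0,W_1$ be the two manifolds produced by the two framings. They agree away from the $2$-handle, so $TW_0|_\Sigma$ and $TW_1|_\Sigma$ coincide on $F$ and on the interior of the core disk and differ only in the clutching function along $S$; two such clutching functions differ by the nontrivial element of $\pi_1(\mathrm{SO}(n-1))\cong\mathbb{Z}/2$ (valid since $n-1\geq 4$). The resulting difference bundle on $\Sigma$ is pulled back from the clutching bundle on $S^2=D^2\cup_{S^1}D^2$ corresponding to this generator, whose $w_2$ is nontrivial. Hence
\[
\langle w_2(W_0),[\Sigma]\rangle - \langle w_2(W_1),[\Sigma]\rangle \;\equiv\; 1 \pmod 2,
\]
and picking the framing that makes $\langle w_2(W),[b]\rangle=0$ finishes the argument.

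The main technical obstacle is making the clutching comparison rigorous, i.e.\ tracking precisely how $w_2$ of the stable tangent bundle restricted to $\Sigma$ changes with the framing. I would handle this by first verifying the model case $F=D^2$ (so $\Sigma=S^2$), where the assertion reduces to the identification $\pi_2(B\mathrm{O})\cong\mathbb{Z}/2$ detected by $w_2$, and then transferring to general $F$ by naturality of $w_2$ under the degree-one collapse $\Sigma\to S^2$ that crushes $F$ to a point.
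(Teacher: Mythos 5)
Your proof is correct, and it takes a genuinely different route from the paper's. The paper produces a surface representing a generator of $B$ by invoking the surjectivity of the Hurewicz-type map $\Omega_2^{SO}(W)\rightarrow H_2(W)$ (an Atiyah--Hirzebruch argument), obtains an \emph{embedded} closed oriented surface $F\hookrightarrow M_1\subset W$ which, after an isotopy, meets the $2$-handle in a single disk parallel to the core, and then reads off the framing dependence from $w_2$ of the \emph{normal} bundle $\nu$ of $F$, using $\langle w_2(W),[F]\rangle = \langle w_2(\nu),[F]\rangle$ since $TF$ is trivial. You instead exploit the hypothesis that $S$ is null-homologous directly: you cap a Seifert surface for $S$ with the core disk to get a canonical $2$-cycle $\Sigma$ hitting the cocore once by construction, write $[b]=[\Sigma]+i_*[\gamma]$, observe that the $\gamma$-term is framing-independent because it lives in $M$, and then compare the tangent bundles $TW_0|_\Sigma$ and $TW_1|_\Sigma$ via their clutching functions along $S$. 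This avoids both the oriented-bordism input and the isotopy that cancels excess intersections with the handle, at the cost of the decomposition $[b]=[\Sigma]+i_*[\gamma]$ and of arguing about the full restricted tangent bundle rather than a normal bundle. Both arguments are sound; yours is somewhat more elementary in its ingredients.

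One step of your write-up is slightly underspecified and worth tightening if you were to include this proof. The claim that the ``difference bundle'' is pulled back from a clutching bundle on $S^2$ should be stated stably: since $E_\epsilon := TW_\epsilon|_\Sigma$ agree over $F$ and over the core disk and their transition data over a collar of $S$ differ by a loop $h\colon S^1\rightarrow \mathrm{SO}(n-1)\hookrightarrow \mathrm{SO}$ generating $\pi_1(\mathrm{SO})\cong\mathbb{Z}/2$, one checks $E_1\oplus\underline{\mathbb{R}}^{k}\cong E_0\oplus c^*\xi_h$ for the collapse map $c\colon \Sigma\rightarrow \Sigma/F\cong S^2$ and the clutching bundle $\xi_h$ over $S^2$, because the relative transition functions $(g_0^{-1}g_1)\oplus h^{-1}=h\oplus h^{-1}$ become null-homotopic after stabilizing. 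From the Whitney formula (with $w_1=0$ by orientability) one gets $w_2(E_1)-w_2(E_0)=c^*w_2(\xi_h)$, and since $c$ has degree one and $\langle w_2(\xi_h),[S^2]\rangle=1$, the evaluation against $[\Sigma]$ flips by $1$. Also, the bound $n-1\geq 4$ you cite is more than needed; $\pi_1(\mathrm{SO}(n-1))\cong\mathbb{Z}/2$ already holds for $n-1\geq 3$, consistent with the paper's hypothesis $n\geq 5$.
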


\begin{proof}
We start from choosing one framing and get an embedding $\varphi \colon
S^{1}\times D^{n-1}\hookrightarrow M$ with $S=S^{1}\times \{0\}$ and attach
a $2$-handle $h^{2}$ via $\varphi $ 
\begin{equation*}
W=M\times \lbrack 0,1]\cup _{\varphi }D^{2}\times D^{n-1}.
\end{equation*}%
Let $M_{0}=M-\varphi (S^{1}\times D^{n-1})$ and $M_{1}=M_{0}\cup _{\varphi
}D^{2}\times S^{n-2},$ the other end of $W$. Clearly, we have that $%
H_{2}(W)=H_{2}(M_{1})$.

The Hurewicz map $\Omega _{2}^{SO}(W)\rightarrow H_{2}(W)$ from the
cobordism group of $W$ to the homology group of $W$ is surjective (easily
seen by a standard Atiyah-Hirzebruch spectral sequence argument), therefore
a generator of $B$ is represented by an embedded closed oriented surface $%
F^{2}\hookrightarrow M_{1}\subset W$. After an isotopy of $F^2$, we may
assume that the intersection of $F^2$ with $\varphi (D^2 \times S^{n-2})$
consists of $m$ disks $D^2 \times \{x_{1}\}, \cdots, D^2 \times \{x_{m}\}$ ($%
x_{i} \in S^{n-2}$) parallel to the core disk $D^2 \times \{0\}$. By
surgering away extraneous pairs of algebraically opposite $2$-disks, we get
a new surface, still denoted by $F$, whose intersection with $\varphi (D^2
\times S^{n-2})$ is $D^2 \times \{x\}$.

Let $\nu$ be the normal bundle of the embedded surface $F$. It's known that
an orientable stable vector bundle $\nu$ over a closed surface $F$ is
trivial if and only if $\langle w_2(\nu), [F] \rangle =0$. Also since the
stable tangent bundle of a closed orientable surface is trivial, we have
that 
\begin{equation*}
\langle w_{2}(W),[F]\rangle =\langle w_{2}(TF \oplus \nu ),[F]\rangle
=\langle w_{2}(\nu ),[F]\rangle .
\end{equation*}%
Therefore if $\nu$ is trivial, then we are done. If $\nu$ is nontrivial,
then we use the other framing of $S$. In this case, along the boundary of $%
D^2 \times \{x\}$, the clutching function of the normal bundle $\nu $
changes, the new normal bundle is trivial.
\end{proof}

\medskip

\begin{proof}[Proof of Theorem \protect\ref{mfd}]
Let $\bar{\tau}$ be the conjugate of $\tau $ (for the definition, see
Section 6 of \cite{Milnor}). First we attach $2$-handles to $M\times \lbrack
0,1]$ to kill $P$. We also attach some trivial $2$-handles such that the
total number of $2$-handles is $N$ if $(-1)^{n}\bar{\tau}\in \mathrm{Wh}(G)$
is represented by a matrix $A$ of size $N$. Denote by $W_{1}$ the surgery
trace and by $M_{1}$ the right hand boundary of $W_{1}$. Suppose that $%
\widetilde{W}_{1}$ is the universal covering space of $W_{1}$ and $\overline{%
M}$ is the corresponding covering space of $M.$ As in the proof of Theorem %
\ref{torsion}, we have a surjection 
\begin{equation*}
\widetilde{j}\colon \pi _{2}(W_{1})\cong H_{2}(\widetilde{W}_{1})\rightarrow
H_{2}(\widetilde{W}_{1},\overline{M})=\bigoplus_{i=1}^{N}\mathbb{Z}%
[G][h_{i}^{2}].
\end{equation*}%
We choose $x_{1},\cdots ,x_{N}\in \pi _{2}(W_{1})$ such that the
coefficients of $\widetilde{j}(x_{i})$ in the basis $[h_{i}^{2}]$ consist of
the $i$-th row of $A$. $\pi _{2}(M_{1})\cong \pi _{2}(W_{1})$.

Note that a stable vector bundle $\xi $ over $S^{2}$ is determined by its
second Stiefel-Whitney class $w_{2}(\xi )$. Hence for an embedded $2$-sphere
in $W_{1}$, the triviality of its normal bundle is determined by the
evaluation of $w_{2}(W)$ on the homology class represented by this sphere.
From the commutative diagram%
\begin{equation*}
\begin{array}{ccc}
H_{2}(\widetilde{W}_{1}) & \overset{\bar{j}}{\rightarrow } & H_{2}(%
\widetilde{W}_{1},\bar{M}) \\ 
\downarrow &  & \downarrow \\ 
H_{2}(W_{1}) & \overset{j}{\rightarrow } & H_{2}(W_{1},M),%
\end{array}%
\end{equation*}
it's seen that under the Hurewicz map, the image of $x_{i}$ ($i=1,\cdots ,N$%
) generate a direct summand $B$ of $H_{2}(W_{1})$, which is mapped
isomorphically to $H_{2}(W_{1},M)$ under $j$. Now by Lemma \ref{spin}, we
may choose appropriate framings of the attaching spheres of the $2$-handles
such that the evaluation of $w_{2}(W_{1})$ on $B$ is zero.

Therefore, we may attach $3$-handles to $M_{1}$ (since $\pi _{2}(M_{1})\cong
\pi _{2}(W_{1})$) using embedded $2$-spheres representing $x_{i}$ ($%
i=1,\cdots ,N$). Denote by $W$ the resulting manifold with right hand
boundary $M_{2}$. From the construction, we see that $W\simeq M^{+}$. Hence, 
$(W,M)$ is homologically acyclic. By Poincar\'{e} duality, we get $H_{\ast }(%
\widetilde{W},\widetilde{M_{2}})=0$, which implies $W\simeq M_{2}$. Also
from the construction it's seen that $\tau (W,M)=[A]=(-1)^{n}\bar{\tau}$. By
the duality of Whitehead torsion (cf. \cite[p.~394]{Milnor}), we have $\tau
(W,M_{2})=\tau $.

In order to prove the uniqueness of $W,$ we modify the construction in \cite[%
p.~197]{fq}. Let $(W^{\prime };M,M^{\prime })$ be another such one-sided $h$%
-cobordism with base $M$. Suppose that $X=W\cup _{M}W^{\prime }$. Then $X$
is an $h$-cobordism with two ends $M_{2}$ and $M^{\prime }$. For the
Whitehead torsions, we have that 
\begin{equation*}
\begin{array}{rcl}
\tau (M_{2}\rightarrow X) & = & \tau (M_{2}\rightarrow W)+\tau (W\rightarrow
X) \\ 
& = & \tau +\tau (M\rightarrow W^{\prime }) \\ 
& = & \tau +(-1)^{n}\bar{\tau}.%
\end{array}%
\end{equation*}

Suppose that $(V;X,X_{1})$ is an $h$-cobordism rel boundary with base $X$
and Whitehead torsion $\tau (X\rightarrow V)=(-1)^{n-1}\bar{\tau}$ (see the
figure below). 

\begin{figure}[ht]
\centering
\includegraphics[height=5.5cm]{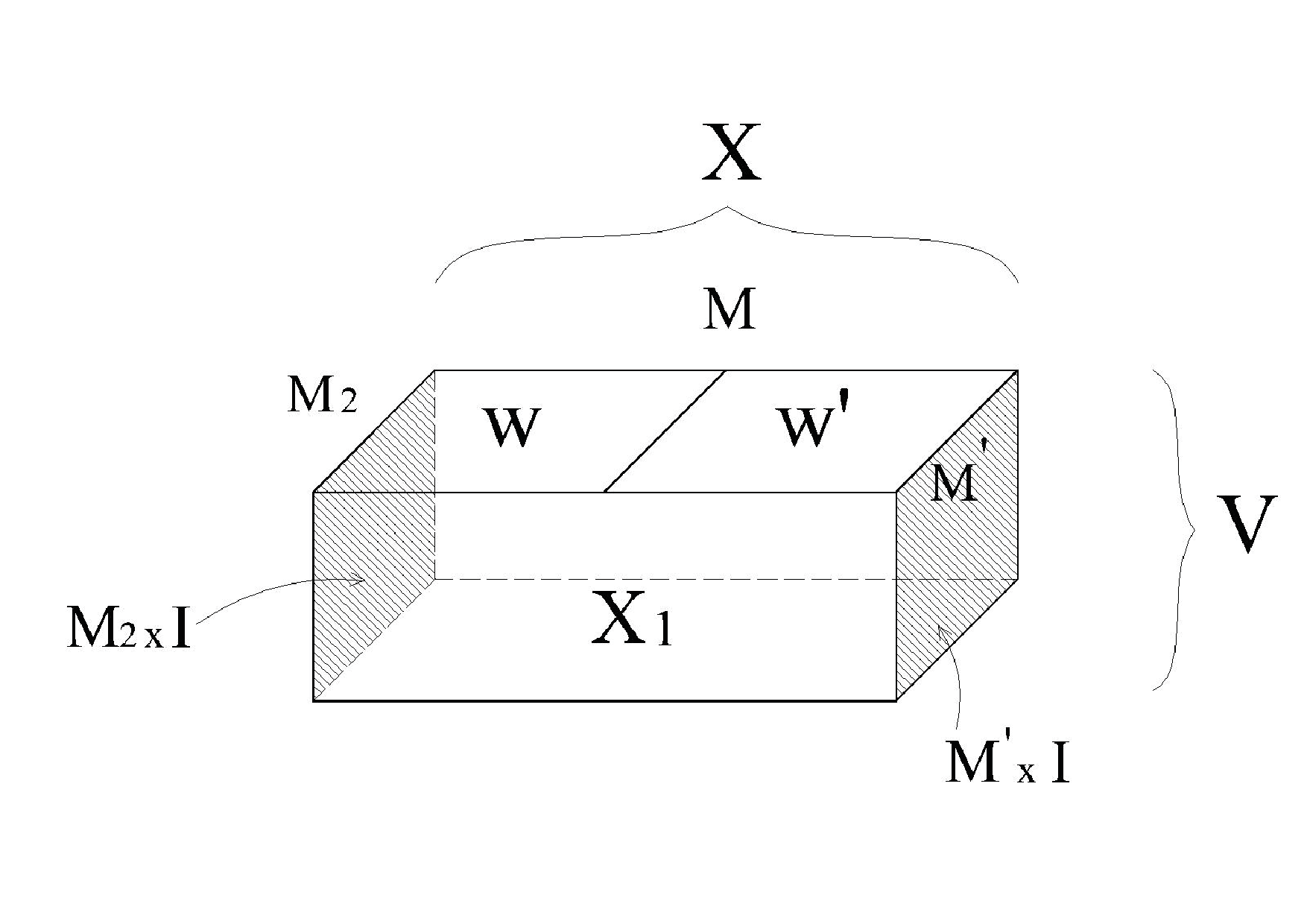} 
\end{figure}
Then we have that 
\begin{equation*}
\tau (M_{2}\rightarrow V)=\tau (M_{2}\rightarrow X)+\tau (X\rightarrow
V)=\tau
\end{equation*}%
and 
\begin{equation*}
\tau (M_{2}\rightarrow V)=\tau (M_{2}\rightarrow X_{1})+\tau
(X_{1}\rightarrow V)=\tau (M_{2}\rightarrow X_{1})+\tau .
\end{equation*}%
These imply that $\tau (M_{2}\rightarrow X_{1})=0$ and $X_{1}$ is an $s$%
-cobordism. On the other hand, since the Whitehead torsion 
\begin{equation*}
\tau (W\rightarrow V)=\tau (W\rightarrow X)+\tau (X\rightarrow V)=0,
\end{equation*}%
$V$ is an $s$-cobordism relative to the boundary from $W$ to $W^{\prime }$.
Therefore, by the $s$-cobordism theorem, the diffeomorphism $X_{1}\cong
M_{2}\times \lbrack 0,1]$ extends to a diffeomorphism $V\cong W\times
\lbrack 0,1]$ and $W$ is diffeomorphic to $W^{\prime }$ relative to $M$.
\end{proof}

The embedding plus construction for manifolds is considered by Guilbault and
Tinsley \cite{gt,gt2}. This is important to their study of ends of
non-compact manifolds. We give an embedding plus construction with a given
Whitehead torsion as follows.

\begin{theorem}
Let $W^{n}$ be a connected manifold of dimension $n\geq 6$ and $M$ a closed
component of the boundary of $W.$ Suppose that $P$ is a normal subgroup of
the kernel $\mathrm{\ker }(\pi _{1}(M)\rightarrow \pi _{1}(W)),$ which is
normally generated by a finite set of elements. Then for any element $\tau
\in \mathrm{Wh}(\pi _{1}(M)/P),$ there exists a one-sided $h$-cobordism $%
(W^{\prime };M,M^{\prime })$ embedded in $W$ fixing $M$ such that $\pi
_{1}(W^{\prime })=\pi _{1}(M)/P$ and $\tau (W^{\prime },M^{\prime })=\tau .$
\end{theorem}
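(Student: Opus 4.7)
The plan is to reproduce the proof of Theorem \ref{mfd} inside $W$, attaching every handle as an embedded handle, using the kernel hypothesis to place the $2$-handles and the bound $n\geq 6$ to place the $3$-handles.

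First, fix a collar $M\times[0,1]\subset W$ of $M$. Represent a finite normal generating set of $P$ by disjoint embedded circles $C_1,\ldots,C_k\subset M\times\{1\}$. Since $[C_i]\in\ker(\pi_1(M)\to\pi_1(W))$, each $C_i$ is null-homotopic in $W$ and so bounds an embedded $2$-disk by general position (we are in ambient dimension $n\geq 5$). Thicken to embedded $2$-handles $h^2_i\subset W$, attach $N-k$ trivial $2$-handles inside a small ball disjoint from $M\times[0,1]$ (with $N$ the size of a matrix $A$ representing $(-1)^n\bar\tau\in\mathrm{Wh}(G)$), and choose the framings as in Lemma \ref{spin} and the proof of Theorem \ref{main2} so as to annihilate the relevant $w_2$-obstruction. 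Let $W_1\subset W$ be the resulting codimension-zero submanifold and $M_1$ its boundary component opposite $M$. Exactly as in the proof of Theorem \ref{mfd}, the surjection
\[
\widetilde{j}\colon \pi_2(W_1)\cong H_2(\widetilde{W}_1)\twoheadrightarrow H_2(\widetilde{W}_1,\overline{M})=\bigoplus_{i=1}^{N}\mathbb{Z}[G][h^2_i]
\]
allows us to pick $x_1,\ldots,x_N\in \pi_2(W_1)\cong \pi_2(M_1)$ with $\widetilde{j}(x_i)$ equal to the $i$-th row of $A$, and to represent each $x_i$ by an embedded $2$-sphere $S^2_i\subset M_1$ with trivial normal bundle.

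The main obstacle is the next step: to attach $3$-handles inside $W$, each $S^2_i$ must bound an embedded $3$-disk in $\overline{W\setminus W_1}$. The first issue is to arrange $[S^2_i]=0\in \pi_2(W)$. Here the kernel hypothesis is used a second time: the composite $\pi_1(M)\to \pi_1(W)$ factors through $G=\pi_1(M)/P$, so the inclusion $M\hookrightarrow W$ extends, at the level of $\pi_1$ and $2$-skeleta, over the abstract plus construction $W^{+}_{\mathrm{abs}}$ produced by Theorem \ref{mfd}. Exploiting the freedom to modify each $x_i$ by an element of $\ker\widetilde{j}=\mathrm{im}(\pi_2(\overline{M})\to \pi_2(W_1))$, one arranges that the image of $x_i$ in $\pi_2(W)$ lies in the image of $\pi_2(M)\to \pi_2(W)$ and is therefore killed, using that $S^2_i$ bounds in $W^{+}_{\mathrm{abs}}$ by construction. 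A null-homotopy of $S^2_i$ in $W$ is then promoted to an embedded $3$-disk by general position together with the Whitney trick (both requiring $n\geq 6$), and its interior is pushed off $W_1$ along a bicollar of $M_1$ in $\overline{W\setminus W_1}$, with any extra interior intersections with $W_1$ removed by surgery on $2$-sphere components of $D^3\cap M_1$. This is the embedded-handle technique of \cite{gt,gt2} adapted to the controlled-torsion setting; the trivial normal bundle of $S^2_i$ then lets us thicken each disk to an embedded $3$-handle in $W$.

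Attaching all such $3$-handles produces the desired codimension-zero submanifold $W'\subset W$, fixing $M$. The chain-level torsion computation at the end of the proof of Theorem \ref{mfd}, combined with Poincar\'e duality, shows that $(W';M,M')$ is a one-sided $h$-cobordism with $\pi_1(W')=\pi_1(M)/P$ and $\tau(W',M')=\tau$, as required.
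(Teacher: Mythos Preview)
Your approach—directly embedding all handles inside $W$ with controlled torsion—is genuinely different from the paper's, but the crucial middle paragraph does not go through as written.

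The gap is in showing that the attaching spheres $S_i^2$ of the $3$-handles are null-homotopic in $\overline{W\setminus W_1}$ (or even in $W$). You assert that, modifying $x_i$ by elements of $\ker\widetilde{j}$, its image in $\pi_2(W)$ can be made to lie in the image of $\pi_2(M)\to\pi_2(W)$ ``and is therefore killed''. Two problems: first, $\ker\widetilde{j}$ is the image of $H_2(\overline{M})$, not of $\pi_2(M)$, so the modification does not obviously land in $\mathrm{im}\,\pi_2(M)$; second, even if it did, lying in $\mathrm{im}(\pi_2(M)\to\pi_2(W))$ is not the same as being zero. Your appeal to ``$S^2_i$ bounds in $W^+_{\mathrm{abs}}$'' does not help, because by your own account the extension of $M\hookrightarrow W$ over $W^+_{\mathrm{abs}}$ is only defined on the $2$-skeleton. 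One can rescue this step via obstruction theory (the obstruction to extending over the $3$-cells lies in $H^3(W^+_{\mathrm{abs}},M;\pi_2(W))=0$ by acyclicity of the plus construction), but that forces the choice of embedded $2$-disks to be made \emph{compatibly with the extension}, and then the framing of those embedded disks is no longer at your disposal—so the invocation of Lemma~\ref{spin} becomes problematic. None of this is addressed.

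The paper avoids all of these difficulties by a short indirect argument: build $(W';M,M')$ abstractly via Theorem~\ref{mfd}, glue on an $h$-cobordism $(W_1;M',N)$ with torsion $-\tau$ so that the composite $(W'\cup_{M'}W_1;M,N)$ is a one-sided $h$-cobordism in which $N\hookrightarrow W'\cup_{M'}W_1$ is \emph{simple}, then invoke the Guilbault--Tinsley embedding theorem \cite[Theorem~1.1]{gt} (which applies precisely in the simple case) to embed $W'\cup_{M'}W_1$ in $W$ rel~$M$. Since $W'$ sits inside $W'\cup_{M'}W_1$, it is now embedded in $W$ with the prescribed torsion. This neatly sidesteps both the $\pi_2$-bounding issue and the framing issue by pushing all the hard geometric work into the already-proved zero-torsion case.
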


\begin{proof}
By Theorem \ref{mfd}, there exists a one-sided $h$-cobordism $(W^{\prime
};M,M^{\prime })$ such that $\tau (W^{\prime },M^{\prime })=\tau .$
According to Theorem 11.1 of Milnor \cite{Milnor}, there exists a cobordism $%
(W_{1};M^{\prime },N)$ such that $\tau (W_{1},N)=-\tau .$ Glue $W^{\prime }$
and $W_{1}$ together along $M^{\prime }$ to get a new manifold $W^{\prime
}\cup _{M^{\prime }}W_{1}.$ Then $\tau (W^{\prime }\cup _{M^{\prime
}}W_{1},N)=0$. Note that $(W^{\prime }\cup _{M^{\prime }}W_{1};M,N)$ is a
one-sided $h$-cobordism with the inclusion map $N\hookrightarrow W^{\prime
}\cup _{M^{\prime }}W_{1}$ a simple homotopy equivalence. By Theorem 1.1 in 
\cite{gt}, there is an embedding $W^{\prime }\cup _{M^{\prime
}}W_{1}\rightarrow W$ fixing $M.$ As $W^{\prime }$ is a subset of $W^{\prime
}\cup _{M^{\prime }}W_{1},$ we finish the proof.
\end{proof}

\bigskip

\begin{proof}[Proof of Theorem \protect\ref{th3}]
For each one-sided $h$-cobordism $(W;M,N),$ the inclusion map $%
M\hookrightarrow W$ induces a homology equivalence with coefficients $%
\mathbb{Z[}\pi _{1}(W)]$ by Poincar\'{e} duality. This shows that the
inclusion $M\hookrightarrow W$ is a Quillen plus construction. Therefore,
the kernel $\mathrm{\ker (}\pi _{1}(M)\rightarrow \pi _{1}(W))$ is a perfect
normal subgroup. Since both $\pi _{1}(M)$ and $\pi _{1}(N)$ are finitely
presented, this kernel is normally finitely generated. Assign $(W;M,N)$ the
Whitehead torsion $\tau (W,N)\in \mathrm{Wh}(\pi _{1}(W)).$ Since a
diffeomorphism has trivial Whitehead torsion, this gives a well-defined map
from $S_{h}(M)$ to the right hand. Theorem \ref{mfd} shows that this map is
both surjective and injective.
\end{proof}

\bigskip

Hua Loo-Keng Key Laboratory of Mathematics, Chinese Academy of Sciences,
Beijing, 100190, China

E-mail: suyang@math.ac.cn

\bigskip

Department of Mathematical Sciences, Xi'an Jiaotong-Liverpool University,
Suzhou, Jiangsu 215123, China

E-mail: Shengkui.Ye@xjtlu.edu.cn

\end{document}